\newcommand{\bm}[1]{\mathbf{#1}}
\newcommand{\R}[1]{\ifnumequal{#1}{1}{\mathbb{R}}{\mathbb{R}^{#1}}}
\newcommand{\ud}{\mathrm{d}}
\newcommand{\norm}[1]{\left\lVert#1\right\rVert}
\newcommand{\abs}[1]{\left\lvert#1\right\rvert}
\DeclareMathOperator{\Div}{div}
\newtheorem{theorem}{Theorem}
\newtheorem{lemma}{Lemma}[section]
\theoremstyle{definition}
\newtheorem{definition}[lemma]{Definition}
\newtheorem{remark}[lemma]{Remark}
\def\m@th{\mathsurround=0pt}
\def\eqal#1{\null\,\vcenter{\openup\jot\m@th
 \ialign{\strut\hfil$\displaystyle{##}$&&$\displaystyle{{}##}$\hfil
 \crcr#1\crcr}}\,}
\def\N{{\mathbb N}}
\def\rot{{\rm rot}\,}
\numberwithin{equation}{section}
\begin{document}

\title[On weighted estimates for certain stream function]{On weighted estimates for the stream function of axially symmetric solutions to the Navier-Stokes equations in a bounded cylinder}

\author[B. Nowakowski]{Bernard Nowakowski}
\address{Military University of Technology\\Cybernetics Faculty\\Institute of Mathematics and Cryptology\\Warsaw\\Poland}
\email{bernard.nowakowski@wat.edu.pl}

\author[W.M. Zajączkowski]{Wojciech M. Zajączkowski}
\address{Military University of Technology\\Cybernetics Faculty\\Institute of Mathematics and Cryptology\\Warsaw\\Poland}
\address{Polish Academy of Sciences\\Institute of Mathematics\\Warsaw\\Poland}
\email{wz@impan.pl}

\subjclass[2010]{35J15, 35J75, 35Q30, 76D05}

\keywords{Stream function, Weighted estimates, Axially symmetric solutions, Navier-Stokes equations, Bounded cylinder}

\begin{abstract}
    Higher-order estimates in weighted Sobolev spaces for solutions to a singular elliptic equation for the stream function in an axially symmetric cylinder are provided. These estimates are
    essential for investigating the existence of axially symmetric solutions to incompressible Navier-Stokes equations in axially symmetric cylinders. To derive the
    estimates the technique of Kondratiev is incorporated. The weight has a form of a power function of the distance to the axis of symmetry.
\end{abstract}

\maketitle

\section{Introduction}

In this note we derive estimates for solutions to the following problem
\begin{equation}\label{eq1.10}
    \left\{
        \begin{aligned}
            &-\Delta \psi + \frac{\psi}{r^2} = \omega\ & &\text{in $\Omega$},\\
            &\psi = 0 & &\text {on $S := \partial \Omega$},
        \end{aligned}
    \right.
\end{equation}
where $\Omega \subset \R3$ is a bounded cylinder with boundary $S$. Before we go into any geometrical details (see \eqref{eq1.50}) we briefly justify why this problem is highly important in mathematical fluid mechanics. 

Our ultimate goal is to study the regularity of weak solutions to an initial-boundary value problem to the three-dimensional axi-symmetric Navier-Stokes equations with a non-vanishing \emph{swirl}. In order to define this quantity we need to introduce cylindrical coordinates. If $x = (x_1,x_2,x_3)$ is in the Cartesian coordinates, then the cylindrical coordinates $(r, \varphi, z)$ are introduced by the relation $x = \boldsymbol \Phi(r,\varphi, z)$, where
\begin{equation*}
    \begin{aligned}
        x_1 &= r\cos\varphi,\\
        x_2 &= r\sin\varphi,\\
        x_3 &= z.
    \end{aligned}.
\end{equation*}
Thus, the standard basis vectors are
\begin{equation*}
    \begin{aligned}
        \bar e_r &= \partial_r \boldsymbol\Phi = (\cos\varphi,\sin\varphi,0).\\
        \bar e_\varphi & = \partial_{\varphi} \boldsymbol \Phi = (-\sin\varphi,\cos\varphi,0).\\
        \bar e_z &= \partial_z \boldsymbol\Phi = (0,0,1).
    \end{aligned}
\end{equation*}

Let $\bm w = \bm w(x, t)$ be any vector-valued function of $x$ and $t$. Then in cylindrical coordinates $\bm w$ is expressed in standard basis as follows 
\begin{equation}
    \bm w = w_r(r, \varphi, z, t)\bar e_r + w_\varphi(r,\varphi, z,t)\bar e_\varphi + w_z(r,\varphi, z,t)\bar e_z.
\label{1.3}
\end{equation}

We call $\bm w$ \emph{axially-symmetric} if
\[
    w_{r,\varphi} = w_{\varphi,\varphi} = w_{z,\varphi} = 0.
\]
In the mathematical theory of fluid mechanics we call function $r w_\varphi$ the \emph{swirl}. 

Let $\bm v$ and $p$ denote the velocity field of an incompressible fluid and the pressure, respectively. Let $\rot \bm v$ be the vorticity vector. Then, the Navier-Stokes equations read
\begin{equation}\label{1.4}
    \left\{
        \begin{aligned}
            &\bm v_t + (\bm v\cdot\nabla) \bm v - \nu\Delta \bm v + \nabla p = \bm f & &\text{in } \Omega^T = \Omega\times(0,T)\\
            &\Div \bm v = 0 & &\text{in }\Omega^T\\
            &\bm v \cdot \bar n = 0 & &\text{on } S^T = S\times(0,T)\\
            &\bm v \cdot \bar e_\varphi = 0 & &\text{on } S^T\\
            &\rot \bm v\cdot\bar e_\varphi=0 & &\text{on } S^T\\
            &\bm v\vert_{t=0} = \bm v_0 & &\text{in } \Omega
        \end{aligned}
    \right.,
\end{equation}
where $\bm f$ is the external force field and $\bar n$ is the unit outward vector normal to $S$ and $\Omega$ is the same domain as in \eqref{eq1.10}. 

The problem of regularity of axially-symmetric solutions to \eqref{1.4} in general is open. Since 1968 (see \cite{Ladyzenskaja:1968aa} and \cite{Ukhovskii:1968aa}) it is known that the Navier-Stokes equations have regular axially-symmetric solutions in $\R3$ provided that $v_\varphi\vert_{t = 0} = 0$ and $f_\varphi = 0$ (hence the swirl is zero). In case of non-vanishing swirl there are some partial results, e.g. \cite{Zhang:2014wk}, \cite{Wei:2016tn}, \cite{Chen:2017we}, \cite{Liu:2022tx}, though this list is far from complete. 

One way to investigate the existence of solutions to \eqref{1.4} is to start with the following observation: if $\bm v$ is axially symmetric solution to \eqref{1.4}, then in light of \eqref{1.3} we have 
\begin{equation*}
    \bm v = v_r(r,z,t)\bar e_r+v_\varphi(r,z,t)\bar e_\varphi+v_z(r,z,t)\bar e_z
\end{equation*}
and
\begin{equation*}
    \rot \bm v = -v_{\varphi,z}(r,z,t)\bar e_r+\omega(r,z,t)\bar e_\varphi+{1\over r}(rv_\varphi)_{,r}(r,z,t)\bar e_z
\end{equation*}
where
\begin{equation}\label{eq1.4}
    \omega = v_{r,z}-v_{z,r}.
\end{equation}
Expressing $(\ref{1.4})_2$ in the cylindrical coordinates yields
\begin{equation*}
    (rv_r)_{,r}+(rv_z)_{,z}=0
\end{equation*}
and combining this equation with \eqref{eq1.4} suggests introducing a stream function $\psi$ such that
\begin{equation}\label{eq1.5a}
    v_r =- \psi_{,z},\quad v_z={1\over r}(r\psi)_{,r}.
\end{equation}
Since
\[
    \Delta = \partial_r^2 + \frac{1}{r} \partial_r + \partial_z^2
\]
we see that this stream function satisfies \eqref{eq1.10}. Note that \eqref{eq1.10}$_2$ implies from \eqref{1.4}$_3$. This explains why \eqref{eq1.10} is of primary interest. Solutions to this problem are essential for establishing global, regular and axially-symmetric solutions to the Navier-Stokes equations with non-vanishing swirl. We will demonstrate this idea for the case of small swirl in \cite{NZ}. Having proper estimates for solutions to \eqref{eq1.10} the proof in \cite{NZ} is elementary.

There is a challenge in investigating \eqref{eq1.10}, which we shall now discuss. Let $a > 0$ and $R > 0$. Then a bounded cylinder $\Omega$ in cylindrical coordinates is given by
\begin{equation}\label{eq1.50}
    \Omega=\left\{x\in\R3\colon r<R,\ |z|<a\right\},
\end{equation}
where $S = \partial\Omega=S_1\cup S_2$ and
\[
    \begin{aligned}
        S_1 &= \left\{x\in\R3\colon r=R,\ |z|<a\right\}\\
        S_2 &= \left\{x\in\R3\colon r<R,\ z\in\{-a,a\}\right\}.
    \end{aligned}
\]

From the above description of $\Omega$ it follows that the terms $\frac{1}{r^2} \psi$ and $\frac{1}{r} \psi_r$ might be undefined for $r = 0$. This is a key challenge. There are a few possibilities for overcoming this issue: 
\begin{itemize}
    \item one could remove the $\epsilon$-neighborhood of $r = 0$, derive necessary estimates and pass with $\epsilon\to0^+$ (see e.g. \cite{Ladyzenskaja:1968aa}),
    \item consider $\frac{1}{r^{1-\epsilon}} \psi$, derive necessary estimates and pass with $\epsilon \to 0^+$ at the end (see e.g. \cite{Leonardi:1999uj}),
    \item use weighted Sobolev spaces.
\end{itemize}

We adopt the third approach. The classical results for the Poisson equation tell that if $\omega \in H^1$, then $\psi \in H^3$. We would expect a similar outcome but we need to handle $\frac{1}{r}$ and similar terms carefully. 

If we were interested in basic energy estimates we could proceed the standard way: multiply \eqref{eq1.10} by $\psi$, integrate by parts, use the H\"older and Cauchy inequalities. This would be justified because in light of \cite{LW} and \cite[Remark 2.4]{NZ2} we have
\begin{equation}\label{eq1.7a}
    \psi = O(r) \quad \text{as } r\to 0^+
\end{equation}
provided that $\psi$ is introduced through \eqref{eq1.5a} and $\bm v$ is an axially symmetric vector field of class $\mathcal{C}^1(0,R)$. Moreover, if $\bm v \in \mathcal{C}^3(0,R)$, then
\begin{equation}\label{eq1.8a}
    \psi = a_1(r,z,t)r + a_3(r,z,t)r^3 + o\left(r^4\right) \qquad \text{as } r\to 0^+,
\end{equation}
where $a_1$ and $a_3$ are smooth functions. Since basic energy estimates are not enough in our case, more sophisticated tools and techniques are needed. Weighted Sobolev spaces seem to be the right choice. 

To conduct our analysis we introduce the quantity $\psi_1=\psi/r$. We see that it satisfies
\begin{equation}\label{eq1.60}
    \left\{
        \begin{aligned}
            &-\Delta\psi_1 - \frac{2}{r}\psi_{1,r} = \frac{\omega}{r} \equiv\omega_1 & &\text{in $\Omega$}, \\
            &\psi_1=0 & &\text{on $S$}.
        \end{aligned}
    \right.
\end{equation}
Since $\psi = \frac{\psi}{r} r = \psi_1 r$ and $r$ is bounded by $R$ we see that any estimates for $\psi_1$ are immediately applicable to $\psi$. In fact, in \cite{NZ} we need estimates for $\psi_1$ because this function appears naturally in some auxiliary problems. 

To examine problem \eqref{eq1.60} in weighted Sobolev spaces we have to derive estimates with respect to $r$ and $z$, separately. To derive an estimate with respect to $r$ we have to examine solutions to (\ref{eq1.60}) independently as well in a neighborhood of the axis of symmetry as in a neighborhood located in a positive distance from it. To perform such considerations we treat $z$ as a parameter and we introduce a partition of unity $\{\zeta^{(1)}(r),\zeta^{(2)}(r)\}$ such that
\[
    \sum_{i=1}^2\zeta^{(i)}(r)=1
\]
and
\begin{equation*}
    \zeta^{(1)}(r) = 
    \begin{cases}
        1 & \text{for } r \leq r_0\\
        0 & \text{for } r \geq 2r_0
    \end{cases},
    \qquad\qquad
    \zeta^{(2)}(r) = 
    \begin{cases}
        0 & \text{for } r \leq r_0\\
        1 & \text{for } r \geq 2r_0
    \end{cases},
\end{equation*}
where $0 < r_0$ is fixed in such a way that $2r_0 < R$.

Let $\tilde\psi_1^{(i)}=\psi_1\zeta^{(i)}$, $i=1,2$ and $\dot\zeta = \frac{\ud}{\ud r}\zeta$, $\ddot\zeta = \frac{\ud^2}{\ud r^2}\zeta$. Then, from \eqref{eq1.60} we obtain two problems
\begin{equation}
    \left\{
    \begin{aligned}
        &-\Delta\tilde\psi_1^{(1)} - \frac{2}{r} \tilde\psi^{(1)}_{1,r} = \omega_1^{(1)} - 2\psi_{1,r}\dot\zeta^{(1)} - \psi_1\ddot\zeta^{(1)} - \frac{2}{r}\psi_1\dot\zeta^{(1)} & &\text{in $\Omega^{(1)}$}\\
        &\tilde\psi_1^{(1)} = 0 & &\text{on $\partial \Omega^{(1)}$}
    \end{aligned}
    \right.,
\label{1.12}
\end{equation}
where
\[
    \Omega^{(1)} = \left\{(r,z)\colon r > 0, z \in (-a,a)\right\}, \qquad \partial \Omega^{(1)} = \left\{(r,z)\colon z \in \{-a, a\}, r > 0\right\}
\]
and
\begin{equation}
    \left\{
    \begin{aligned}
        &-\Delta\tilde\psi_1^{(2)} - \frac{2}{r}\tilde\psi^{(2)}_{1,r} = \omega_1^{(2)} - 2\psi_{1,r}\dot\zeta^{(2)}-\psi_1\ddot\zeta^{(2)} - \frac{2}{r}\psi_1\dot\zeta^{(2)} & &\text{in $\Omega^{(2)}$}\\
        &\tilde\psi^{(2)} = 0 & &\text{on $\partial \Omega^{(2)}$}
    \end{aligned}
    \right.,
\label{1.13}
\end{equation}
where 
\begin{equation}\label{eq1.13a}
    \Omega^{(2)} = \left\{(r,z)\colon r_0 < r < R, z \in (-a,a)\right\}, \qquad \partial \Omega^{(2)} = \partial \Omega^{(2)}_1 \cup \partial \Omega^{(2)}_2
\end{equation}
and 
\[
    \partial \Omega^{(2)}_1 = \left\{(r,z)\colon z \in \{-a, a\}, r_0 < r < R\right\}, \quad \partial \Omega^{(2)}_2 = \left\{(r,z)\colon z \in (-a,a), r = R\right\}.
\]

We temporarily simplify the notation using
\begin{equation}\label{eq1.14a}
    \begin{aligned}
        u &= \tilde\psi_1^{(1)}, \qquad\qquad w = \tilde\psi_1^{(2)},\\
        f &= \omega_1^{(1)} - 2\psi_{1,r}\dot\zeta^{(1)} - \psi_1\ddot\zeta^{(1)} - \frac{2}{r}\psi_1\dot\zeta^{(1)},\\
        g &= \omega_1^{(2)} - 2\psi_{1,r}\dot\zeta^{(2)} - \psi_1\ddot\zeta^{(2)} - \frac{2}{r}\psi_1\dot\zeta^{(2)}.
    \end{aligned}
\end{equation}

Then \eqref{1.12} and \eqref{1.13} become
\begin{equation}
    \left\{
    \begin{aligned}\label{1.14}
        &-\Delta u - \frac{2}{r}u_{,r} = f & &\text{in $\Omega^{(1)}$}\\
        &u = 0 & &\text{on $\partial \Omega^{(1)}$}
    \end{aligned}
    \right.
\end{equation}
and 
\begin{equation}\label{1.15}
    \left\{
        \begin{aligned}
            &-\Delta w- \frac{2}{r} w_{,r} = g & &\text{in $\Omega^{(2)}$}\\
            &w = 0 & &\text{on  $\partial \Omega^{(2)}$}
        \end{aligned}
    \right..
\end{equation}

As we can see both above problems are similar. What differs them is the domain. In case of $\Omega^{(2)}$ we can safely use the classical theory for the Poisson equation. 

Since $r_0 > 0$ we instantly deduce that problem \eqref{1.15} can be solved classically. 

For studying the existence and properties of solutions to \eqref{1.14} we need the weighted Sobolev spaces. They are defined at the beginning of Section \ref{s2}. In addition we will be utilizing the Kondratiev technique (see \cite{Ko67}). It offers a way to deal with expression of the form $\frac{u}{r^{\alpha}}$ when $\alpha > 0$. We saw in \eqref{eq1.7a} that $\psi_1$ is well defined at $r = 0$ but in case of the weighted Sobolev space $H^3_0$ we would need to handle $\frac{\psi_1}{r^3}$ in $L_2$. Function $\psi_1$ does not have such an order of vanishing when $r \to 0^+$, thus it has to be modified in a certain way. These kinds of modifications form the essence of this note. 

The very first theorem we prove is the following:
\begin{theorem}\label{t1}
    Suppose that $\psi_1$ is a solution to (\ref{eq1.60}). Assume that $\omega_1\in L_{2,\mu}(\Omega)$, $\mu\in(0,1)$. Then the estimate holds
    \begin{multline*}
        \norm{\psi_1 - \psi_1(0)}_{L_2(-a,a;H_\mu^2(0,R))}^2 + \norm{\psi_{1,zr}}_{L_{2,\mu}(\Omega)}^2 + \norm{\psi_{1,zz}}_{L_{2,\mu}(\Omega)}^2 \\
+ 2\mu(2-2\mu) \norm{\psi_{1,z}}_{L_{2,\mu-1}(\Omega)}^2 \leq c\norm{\omega_1}_{L_{2,\mu}(\Omega)}^2,
\end{multline*}
where $\psi_1(0) = \psi_1\vert_{r=0}$.
\end{theorem}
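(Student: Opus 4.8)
The plan is to obtain the estimate as the sum of two independent contributions. The first is an energy estimate for the derivatives of $\psi_1$ in the $z$-direction, derived by testing \eqref{eq1.60} against a weighted multiple of $\psi_{1,zz}$. The second is a weighted elliptic estimate in the radial variable, in which $z$ is frozen and \eqref{eq1.60} is viewed as a degenerate second-order ordinary differential equation to which the Kondratiev technique applies. Since the map $\omega_1\mapsto\psi_1$ is linear and the asserted inequality is stable under limits, it suffices to carry out the argument for solutions generated by smooth data, so that the integrations by parts and the pointwise-in-$z$ statements below are legitimate; the general case then follows by approximation.

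\textbf{Step 1 ($z$-derivatives and the Hardy term).} Rewrite \eqref{eq1.60} as $-\psi_{1,rr}-\frac3r\psi_{1,r}-\psi_{1,zz}=\omega_1$ and test it with $-2\psi_{1,zz}$ in the weighted norm of $L_{2,\mu}(\Omega)$. One integration by parts in $z$ transfers a $z$-derivative onto $\psi_{1,rr}+\frac3r\psi_{1,r}$; the boundary terms on $S_2$ vanish because $\psi_1=0$ on $S_2$ forces the tangential derivatives $\psi_{1,r},\psi_{1,rr}$ to vanish there. A subsequent integration by parts in $r$ yields $2\norm{\psi_{1,zr}}_{L_{2,\mu}(\Omega)}^2$ together with cross terms linear in $\psi_{1,z}$; the boundary term at $r=R$ vanishes since $\psi_1=0$ on $S_1$ gives $\psi_{1,z}\vert_{r=R}=0$, and the term at $r=0$ vanishes because it carries a positive power of $r$ while $\psi_1,\psi_{1,r}$ remain bounded near the axis, cf.\ \eqref{eq1.7a}. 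A final integration by parts in $r$ converts the cross terms into a Hardy-type term with coefficient $2\mu(2-2\mu)$, nonnegative precisely for $\mu\in[0,1]$ --- this, together with the vanishing of the boundary contributions at $r=0$, is where the hypothesis $\mu\in(0,1)$ enters. After bounding the weighted pairing of $\omega_1$ with $\psi_{1,zz}$ by Cauchy--Schwarz and Young's inequality, one arrives at
\begin{multline*}
2\norm{\psi_{1,zr}}_{L_{2,\mu}(\Omega)}^2+2\norm{\psi_{1,zz}}_{L_{2,\mu}(\Omega)}^2+2\mu(2-2\mu)\norm{\psi_{1,z}}_{L_{2,\mu-1}(\Omega)}^2\\
\le\norm{\psi_{1,zz}}_{L_{2,\mu}(\Omega)}^2+\norm{\omega_1}_{L_{2,\mu}(\Omega)}^2,
\end{multline*}
and absorbing $\norm{\psi_{1,zz}}_{L_{2,\mu}(\Omega)}^2$ on the left yields $\norm{\psi_{1,zr}}_{L_{2,\mu}(\Omega)}^2+\norm{\psi_{1,zz}}_{L_{2,\mu}(\Omega)}^2+2\mu(2-2\mu)\norm{\psi_{1,z}}_{L_{2,\mu-1}(\Omega)}^2\le\norm{\omega_1}_{L_{2,\mu}(\Omega)}^2$.

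\textbf{Step 2 (radial $H_\mu^2$-estimate).} Fixing $z$, read \eqref{eq1.60} as $-\psi_{1,rr}-\frac3r\psi_{1,r}=\omega_1+\psi_{1,zz}$ on $(0,R)$ with the Dirichlet condition $\psi_1(R,z)=0$. The operator $-\partial_r^2-\frac3r\partial_r=-r^{-3}\partial_r(r^3\partial_r\,\cdot\,)$ is the radial part of the Laplacian on $\R4$; its indicial exponents are $0$ and $-2$, and since $\mu\in(0,1)$ is a non-exceptional weight for it, the Kondratiev theory gives, for a.e.\ $z$,
\[
\norm{\psi_1(\cdot,z)-\psi_1(0,z)}_{H_\mu^2(0,R)}^2\le c\bigl(\norm{\omega_1(\cdot,z)}_{L_{2,\mu}(0,R)}^2+\norm{\psi_{1,zz}(\cdot,z)}_{L_{2,\mu}(0,R)}^2\bigr),
\]
where $\psi_1(0,z):=\psi_1\vert_{r=0}$. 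The subtraction of $\psi_1(0,z)$ removes the $r^0$-mode, which does not lie in $H_\mu^2(0,R)$; by \eqref{eq1.8a} the remainder vanishes like $r^2$ as $r\to0^+$ (when $\psi_1$ is smooth), so it does belong to that space, and $z\mapsto\psi_1(0,z)$ inherits enough regularity for the Bochner norm to make sense. Integrating the last inequality over $z\in(-a,a)$ and applying Fubini,
\[
\norm{\psi_1-\psi_1(0)}_{L_2(-a,a;H_\mu^2(0,R))}^2\le c\bigl(\norm{\omega_1}_{L_{2,\mu}(\Omega)}^2+\norm{\psi_{1,zz}}_{L_{2,\mu}(\Omega)}^2\bigr).
\]

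\textbf{Step 3 (conclusion and main difficulty).} By Step 1, $\norm{\psi_{1,zz}}_{L_{2,\mu}(\Omega)}^2\le\norm{\omega_1}_{L_{2,\mu}(\Omega)}^2$, so the right-hand side of the last display becomes $c\norm{\omega_1}_{L_{2,\mu}(\Omega)}^2$; adding the three $z$-direction terms from Step 1 gives the claimed inequality. I expect the main obstacle to be Step 2: the precise setup of the weighted space $H_\mu^2(0,R)$ and of the boundary value $\psi_1(0,z)$, and the execution of the Kondratiev estimate with a constant uniform in $z$ (this is also the point where the full strength of the Kondratiev technique, rather than classical elliptic theory, is needed). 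A secondary technical point is the rigorous justification, through the approximation scheme, of the integrations by parts in Step 1 near the axis $r=0$.
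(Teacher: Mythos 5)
Your proposal is correct and takes essentially the same route as the paper: the weighted energy estimate in $z$ obtained by testing with $r^{2\mu}\psi_{1,zz}$ is exactly the paper's Lemma \ref{lem4.1} (yielding the coefficient $2\mu(1-\mu)$, i.e.\ the theorem's $2\mu(2-2\mu)$ up to the harmless factor you introduce), and your frozen-$z$ radial estimate with $\psi_1(0)$ subtracted is what the paper establishes via Lemma \ref{lem3.1} and the residue/contour-shift argument identifying the subtracted constant as $u_1(0)$. The only difference is presentational: you invoke Kondratiev theory on $(0,R)$ as a black box, whereas the paper makes it explicit by localizing near the axis with the partition of unity, passing to $\tau=-\ln r$ and the resolvent $R(\lambda)=1/(\lambda(\lambda+2i))$ on the half-line, and treating the outer region by classical elliptic theory.
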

In light of \eqref{eq1.8a} we cannot expect $\psi_1 \in H^2_\mu(0,R)$ for almost all $z$. However, this should be the case for the difference $\psi_1 - \psi_1\vert_{r = 0}$.

In a similar manner we obtain a higher order regularity

\begin{theorem}\label{t1.2}
    Let $\psi_1$ be a solution to \eqref{eq1.60}. Let $\omega_1\in H_\mu^1(\Omega)$, $\mu\in(0,1)$. Then
    \begin{multline*}
        \norm{\psi_1 - \psi_1(0)}_{L_2(-a,a;H_\mu^3(0,R))}^2 + \norm{\psi_{1,zzz}}_{L_{2,\mu}(\Omega)}^2 + \norm{\psi_{1,zzr}}_{L_{2,\mu}(\Omega)}^2 \\
        + 2\mu(2-2\mu)\norm{\psi_{1,zz}}_{L_{2,\mu-1}(\Omega)}^2\leq c\norm{\omega_1}_{H_\mu^1(\Omega)}^2.
    \end{multline*}
\end{theorem}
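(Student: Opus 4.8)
\textbf{Proof proposal for Theorem \ref{t1.2}.}

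The plan is to reduce the third-order estimate to the second-order one already provided by Theorem \ref{t1}, applied to a differentiated problem. First I would differentiate problem \eqref{eq1.60} with respect to $z$ and set $\chi = \psi_{1,z}$. Since the coefficients of the operator $-\Delta - \frac{2}{r}\partial_r$ do not depend on $z$, and the domain $\Omega$ is a cylinder (so $z$-differentiation is compatible with the boundary condition on $S_1$, the lateral part $r = R$), the function $\chi$ satisfies
\begin{equation*}
    \left\{
    \begin{aligned}
        &-\Delta\chi - \frac{2}{r}\chi_{,r} = \omega_{1,z} & &\text{in }\Omega,\\
        &\chi = 0 & &\text{on }S_1.
    \end{aligned}
    \right.
\end{equation*}
The subtlety here is the top and bottom $S_2$ ($z = \pm a$): there $\psi_1 = 0$ but $\psi_{1,z}$ need not vanish, so $\chi$ does not satisfy the full homogeneous Dirichlet condition. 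I would handle this exactly as in the proof of Theorem \ref{t1}, where only the lateral behaviour (the $r$-direction, carrying the singular weight) is treated by the Kondratiev machinery while the $z$-direction is dealt with by testing and integration by parts in $z$; the $z = \pm a$ boundary terms are absorbed or controlled precisely as they were there. Applying Theorem \ref{t1} with $\omega_1$ replaced by $\omega_{1,z}$ — legitimate since $\omega_1 \in H^1_\mu(\Omega)$ gives $\omega_{1,z} \in L_{2,\mu}(\Omega)$ — yields
\begin{equation*}
    \norm{\chi - \chi(0)}_{L_2(-a,a;H^2_\mu(0,R))}^2 + \norm{\chi_{,zr}}_{L_{2,\mu}(\Omega)}^2 + \norm{\chi_{,zz}}_{L_{2,\mu}(\Omega)}^2 + 2\mu(2-2\mu)\norm{\chi_{,z}}_{L_{2,\mu-1}(\Omega)}^2 \leq c\norm{\omega_{1,z}}_{L_{2,\mu}(\Omega)}^2.
\end{equation*}
Rewriting in terms of $\psi_1$, this is precisely $\norm{\psi_{1,zzz}}_{L_{2,\mu}}^2 + \norm{\psi_{1,zzr}}_{L_{2,\mu}}^2 + 2\mu(2-2\mu)\norm{\psi_{1,zz}}_{L_{2,\mu-1}}^2$ together with the mixed term $\norm{\psi_{1,z} - \psi_{1,z}(0)}_{L_2(-a,a;H^2_\mu(0,R))}^2$, bounded by $c\norm{\omega_1}_{H^1_\mu(\Omega)}^2$.

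It remains to upgrade the spatial norm from the $\psi_{1,z}$-level back to the missing piece of $\norm{\psi_1 - \psi_1(0)}_{L_2(-a,a;H^3_\mu(0,R))}^2$, i.e.\ to control the purely $r$-derivatives $\psi_{1,rrr}$ and the lower-order weighted terms $\psi_{1,rr}/r$, $\psi_{1,r}/r^2$, $(\psi_1-\psi_1(0))/r^3$ in $L_{2,\mu}$. For this I would return to the original equation \eqref{eq1.60} written, for a.e.\ fixed $z$, as an ODE in $r$:
\begin{equation*}
    -\psi_{1,rr} - \frac{3}{r}\psi_{1,r} = \omega_1 + \psi_{1,zz}.
\end{equation*}
The right-hand side now lies in $H^1_\mu(0,R)$ for a.e.\ $z$ after integrating in $z$, by the $z$-estimates from Theorem \ref{t1} applied to $\psi_1$ itself (which give $\psi_{1,zz} \in L_{2,\mu}(\Omega)$ and, combined with the already-established mixed control, $\psi_{1,zzr}\in L_{2,\mu}$). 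Then the one-dimensional weighted elliptic estimate for the operator $-\partial_r^2 - \frac{3}{r}\partial_r$ — this is where the Kondratiev technique enters, controlling $\frac{u}{r^3}$ in $L_2$ for a function vanishing to the right order at $r=0$, after the subtraction of $\psi_1(0)$ — bumps the $r$-regularity by one derivative and supplies the remaining terms of the $H^3_\mu$ norm. Adding this to the output of the previous paragraph and integrating in $z$ completes the estimate.

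The main obstacle, as in Theorem \ref{t1}, is the interplay at $r = 0$: one cannot place $\psi_1$ itself in $H^3_\mu$ because of the $a_1 r$ term in \eqref{eq1.8a}, so the subtraction of the constant $\psi_1(0) = \psi_1\vert_{r=0}$ (in the $z$-variable this is itself a function, $\psi_1(0,z)$) must be done consistently at every differentiation level, and one must verify that the commutator terms produced by localizing via $\zeta^{(1)},\zeta^{(2)}$ and by the subtraction are themselves controlled by $\norm{\omega_1}_{H^1_\mu}$ — this uses that near $r=0$ the lower-order correction terms $2\psi_{1,r}\dot\zeta^{(i)}$, $\psi_1\ddot\zeta^{(i)}$, $\frac{2}{r}\psi_1\dot\zeta^{(i)}$ in \eqref{eq1.14a} are supported in $r \in [r_0, 2r_0]$, hence harmless, while the genuinely singular estimates are confined to the inner region where the Hardy-type inequalities underlying the Kondratiev weights apply. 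The bookkeeping of which boundary terms at $z = \pm a$ survive and how the factor $2\mu(2-2\mu)$ is produced should parallel exactly the proof of Theorem \ref{t1}, one derivative higher.
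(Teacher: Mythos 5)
Your proposal follows essentially the same route as the paper: the third-order $z$-estimates are obtained by differentiating \eqref{eq1.60} in $z$ and re-running the weighted energy argument of Theorem \ref{t1} with data $\omega_{1,z}$ (this is the paper's Lemma \ref{lem4.2}), and the remaining $r$-part of the $H^3_\mu$ norm comes from the one-dimensional Kondratiev estimate for $-\partial_r^2-\frac{3}{r}\partial_r$ with the subtraction of $\psi_1(0)$, applied with right-hand side $\omega_1+\psi_{1,zz}\in H^1_\mu$ (the paper's Lemma \ref{lem3.1} with $k=1$ together with \eqref{eq3.22n}). The $S_2$ boundary subtlety you flag is settled exactly as in the paper, via $\psi_{1,zz}\vert_{z=\pm a}=0$, which uses the (implicit) assumption $\omega_1\vert_{z=\pm a}=0$.
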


The above theorems are useful but we need the estimates when $\mu = 0$. We cannot simply pass with $\mu \to 0$ because $\psi_1 - \psi_1(0) \notin H^2_0$ nor $H^3_0$. Instead we construct two auxiliary functions $\chi$ and $\eta$ that we subtract from $\psi_1$ (this construction is presented in Lemmas \ref{l3.6} and \ref{lem3.7}). This allows us to derive necessary estimates in $H^3_0$. We emphasize that $H^3_0$ denotes a weighted Sobolev space (with the weight $\mu = 0$; see Section \ref{s2}) as opposed to a Sobolev space with zero traces. 

In the below theorems we assume that $\psi_1$ is a weak solution to \eqref{eq1.60}. Basic energy estimates and the existence of weak solutions are discussed in Section \ref{s2}.

\begin{theorem}\label{t1.3}
    Suppose that $\psi_1$ is a weak solution to \eqref{eq1.60}. Let $\omega_1\in L_2(\Omega)$ and introduce
    \[
        \chi(r,z) = \int_0^r\psi_{1,\tau}(1+K(\tau))\,\ud\tau,
    \]
    where $K(\tau)$ is a smooth function with a compact support such that
    \[
        \lim_{r\to 0^+} \frac{K(r)}{r^2} = c_0<\infty.
    \]
    Then 
    \begin{equation*}
        \norm{\psi_1 - \psi_1(0) - \chi}_{L_2(-a,a;H_0^2(0,R))}^2 + \norm{\psi_{1,zr}}_{L_2(\Omega)}^2 \\
        +\norm{\psi_{1,zz}}_{L_2(\Omega)}^2 \leq c\norm{\omega_1}_{L_2(\Omega)}^2,
    \end{equation*}
\end{theorem}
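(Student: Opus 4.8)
The plan is to bound the two groups of terms in the assertion separately: the unweighted norms $\norm{\psi_{1,zr}}_{L_2(\Omega)}$, $\norm{\psi_{1,zz}}_{L_2(\Omega)}$, which need no correction, and the weighted norm $\norm{\psi_1-\psi_1(0)-\chi}_{L_2(-a,a;H_0^2(0,R))}$, which is the only place where $\chi$ plays a role. For the first group I would test equation \eqref{eq1.60} with $-\psi_{1,zz}$ and integrate over $\Omega$, first for a smooth approximation of $\psi_1$ and then passing to the limit. Writing $\Delta=\bigl(\partial_r^2+\tfrac1r\partial_r\bigr)+\partial_z^2$ and integrating by parts first in $z$ and then in $r$, every boundary term disappears or has a favourable sign: on $S_2$ the operator $\partial_r^2+\tfrac1r\partial_r$ annihilates $\psi_1$ because it is tangential there and $\psi_1\equiv0$ on $S_2$; on $S_1$ the relevant factor is $\psi_{1,z}=\partial_z(\psi_1|_{r=R})=0$; at $r=0$ the surviving factor either carries the power $r$ coming from $\ud x=r\,\ud r\,\ud z\,\ud\varphi$ or produces the nonnegative trace term $\int_{-a}^a\psi_{1,z}^2(0,z)\,\ud z$, to which the singular term $\tfrac2r\psi_{1,r}$ also contributes. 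Absorbing $\int_\Omega\omega_1\psi_{1,zz}\,\ud x$ by the Cauchy inequality then gives
\begin{equation*}
    \norm{\psi_{1,zz}}_{L_2(\Omega)}^2+\norm{\psi_{1,zr}}_{L_2(\Omega)}^2+\int_{-a}^a\psi_{1,z}^2(0,z)\,\ud z\le c\norm{\omega_1}_{L_2(\Omega)}^2 ,
\end{equation*}
so the trace $\psi_{1,z}|_{r=0}$ is controlled as a byproduct.

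For the radial part I would start from the elementary identity $\int_0^r\psi_{1,\tau}(\tau,z)\,\ud\tau=\psi_1(r,z)-\psi_1(0,z)$, which exhibits the quantity to be estimated as
\begin{equation*}
    v:=\psi_1-\psi_1(0)-\chi=-\int_0^r\psi_{1,\tau}(\tau,z)K(\tau)\,\ud\tau ,\qquad v_{,r}=-\psi_{1,r}K ,\qquad v_{,rr}=-\psi_{1,rr}K-\psi_{1,r}K' .
\end{equation*}
Since $K$ is smooth, compactly supported and $K(r)/r^2\to c_0<\infty$, we have $K(0)=K'(0)=0$ together with the pointwise bounds $\abs{K(r)}\le c\,r^2$ and $\abs{K'(r)}\le c\,r$. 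The norm $\norm{v}_{L_2(-a,a;H_0^2(0,R))}^2$ is a sum of weighted $L_2$-contributions of $v$, $v_{,r}$, $v_{,rr}$. For the two lowest ones, the two extra powers of $r$ furnished by $\abs{K(r)}\le c\,r^2$ — together with the Cauchy inequality in $\tau$, which yields $\abs{v(r,z)}\le c\,r^2\norm{\psi_{1,r}(\cdot,z)}_{L_2((0,r),\,\tau\,\ud\tau)}$ — neutralize the singular weights $r^{-4}$ and $r^{-2}$, and both terms are bounded by $\norm{\psi_{1,r}}_{L_2(\Omega)}^2$, hence by $\norm{\omega_1}_{L_2(\Omega)}^2$ through the basic energy estimate of Section \ref{s2}. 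For the top-order term I would split $\norm{v_{,rr}}_{L_2(\Omega)}\le\norm{\psi_{1,rr}K}_{L_2(\Omega)}+\norm{\psi_{1,r}K'}_{L_2(\Omega)}$; the second summand is again $\le c\norm{\psi_{1,r}}_{L_2(\Omega)}$, and for the first, using $\abs{K(r)}\le c\,r^2$, $r\le R$ and Theorem \ref{t1} with a \emph{fixed} $\mu\in(0,1)$, one has $\norm{\psi_{1,rr}K}_{L_2(\Omega)}\le c\norm{\psi_{1,rr}}_{L_{2,\mu}(\Omega)}\le c\norm{\omega_1}_{L_{2,\mu}(\Omega)}\le c\norm{\omega_1}_{L_2(\Omega)}$, the last inequality because $\Omega$ is bounded. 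Collecting the three pieces and adding the $z$-estimate proves the theorem. (An equivalent organization, parallel to the proofs of Theorems \ref{t1}--\ref{t1.2}, is to derive the equation for $v$: using \eqref{eq1.60}, the identity $\Delta\psi_1(0)=\psi_1(0)_{,zz}$ and $K(0)=K'(0)=0$, the singular terms $\tfrac2r\psi_{1,r}(1+K)$ cancel and $\psi_1(0)_{,zz}$ cancels $\psi_{1,zz}(0)$, leaving $-\Delta v-\tfrac2r v_{,r}=-K\omega_1+K'\psi_{1,r}-\int_0^r K'(\tau)\psi_{1,zz}(\tau,z)\,\ud\tau$, whose right-hand side lies in $L_2(\Omega)$ with norm $\le c\norm{\omega_1}_{L_2(\Omega)}$ by the basic energy estimate and the $z$-estimate above; one then reruns the weighted scheme of Theorem \ref{t1} at $\mu=0$, which now closes because $v$, unlike $\psi_1-\psi_1(0)$, vanishes to third order at $r=0$.)

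I expect the main obstacle to be exactly the near-axis bookkeeping for the top-order radial term: one must ensure that $\norm{\psi_{1,rr}K}_{L_2(\Omega)}$ is genuinely finite and controlled by $\norm{\omega_1}_{L_2(\Omega)}$ \emph{without} letting $\mu\to0$ in Theorem \ref{t1}, whose constant degenerates there — the factor $2\mu(2-2\mu)$ in its statement being the symptom. This is precisely what the hypothesis $K(r)/r^2\to c_0$ is designed to buy: it forces the two extra powers of $r$ in $v$ and $v_{,r}$ and makes the right-hand side of the equation for $v$ square-integrable; pinning down an admissible $K$ and verifying the cancellations it induces is the technical heart of the argument. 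A secondary point is the justification, for a merely weak solution, of the integrations by parts used in the $z$-estimate, which I would handle by a standard regularization.
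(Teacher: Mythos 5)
Your argument is correct, and for the $z$-part (testing \eqref{eq1.60} with $-\psi_{1,zz}$, killing the boundary terms on $S_1$, $S_2$ and keeping the good-signed trace $\int_{-a}^a\psi_{1,z}^2(0,z)\,\ud z$) it coincides with the paper's Lemma \ref{lem3.8}. For the weighted radial part, however, you take a genuinely different route. The paper proves an auxiliary Hardy-type lemma in weighted spaces (Lemma \ref{lem3.5}, after Kondratiev), applies it through Lemma \ref{lem3.7} to get $\norm{\psi_1-\psi_1(0)-\chi}_{L_2(-a,a;H^2_0)}\le c\norm{\psi_1}_{L_2(-a,a;H^2)}$, and then needs the full \emph{unweighted} second-order bound of Lemma \ref{lem3.8} — in particular $\norm{\psi_{1,rr}}_{L_2(\Omega)}$ and $\norm{\tfrac1r\psi_{1,r}}_{L_2(\Omega)}$, obtained by the extra test function $\tfrac1r\psi_{1,r}$ — to close the estimate by the data. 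You instead exploit the identity $v:=\psi_1-\psi_1(0)-\chi=-\int_0^r\psi_{1,\tau}K\,\ud\tau$ and the pointwise bounds $\abs{K}\le cr^2$, $\abs{K'}\le cr$: the two lower-order weighted terms are handled by Cauchy--Schwarz and the energy estimate of Lemma \ref{l2.3} (so you reproduce by hand what the paper gets from the Hardy lemma), and the top-order term $\norm{\psi_{1,rr}K}_{L_2}$ is absorbed into the already-proved Theorem \ref{t1} at a \emph{fixed} $\mu\in(0,1)$, using $r^{4}\le cr^{2\mu}$ on $(0,R)$ and $L_2(\Omega)\subset L_{2,\mu}(\Omega)$; this is legitimate and non-circular, and it correctly sidesteps the degeneration of the constant as $\mu\to0$. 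What the paper's organization buys is the stronger by-products $\norm{\psi_{1,rr}}_{L_2(\Omega)}+\norm{\tfrac1r\psi_{1,r}}_{L_2(\Omega)}\le c\norm{\omega_1}_{L_2(\Omega)}$ (needed elsewhere, e.g.\ as input to the $H^3_0$ statement), while your route is more economical for this particular theorem, avoiding both the Hardy lemma and the $\tfrac1r\psi_{1,r}$ test; your closing remarks on the role of $K(r)/r^2\to c_0$ and on regularization for weak solutions are exactly the right caveats.
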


In case of $H^3_0$ we have
\begin{theorem}\label{t1.4}
    Let $\psi_1$ be a weak solution to (\ref{eq1.60}). Let $\omega_1\in H^1(\Omega)$. Then
    \begin{multline*}
        \int_{\mathbb{R}}\norm{\psi_1 - \psi_1(0) - \eta}_{H_0^3(\mathbb{R}_+)}^2\, \ud z + \int_{\mathbb{R}}\int_{\mathbb{R}_+} \left(\abs{\psi_{1,zzz}}^2 + \abs{\psi_{1,zzr}}^2 + \abs{\psi_{1,zz}}^2\right)\, r\ud r \ud z \\
        \leq c\norm{\omega_1}_{H^1(\Omega)}^2,
    \end{multline*}
    where 
    \[
        \eta(r,z) = - \int_0^r(r-\tau)\bigg({3\over r}\psi_{1,\tau}+\psi_{1,zz}+\omega_1\bigg) (1+K(\tau))\,\ud\tau
    \]
    and $K$ is the same as in Theorem \ref{t1.3}.
\end{theorem}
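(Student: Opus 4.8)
The statement is the $\mu=0$ counterpart of Theorem~\ref{t1.2}. One cannot obtain it by letting $\mu\to0^+$ there, both because $\psi_1-\psi_1(0)\notin H^3_0$ and because the Kondratiev constants degenerate as $\mu\to0^+$ (the vanishing of the coefficient $2\mu(2-2\mu)$ signals this); the remedy is to subtract the explicit correction $\eta$ produced in Lemma~\ref{lem3.7}. As in Theorem~\ref{t1.3}, I would first localise in $r$ by $\{\zeta^{(1)},\zeta^{(2)}\}$: the piece $w=\psi_1\zeta^{(2)}$, supported in $\{r\ge r_0\}$, solves the uniformly elliptic problem \eqref{1.15} with right-hand side in $H^1$ and is controlled by classical $H^3$ regularity, on $\{r\ge r_0\}$ the weighted space $H^3_0$ being equivalent to the usual $H^3$. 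Everything therefore reduces to the near-axis piece $u=\psi_1\zeta^{(1)}$.

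\emph{Step 1 (the $z$-derivatives).} Because the coefficients in \eqref{eq1.60} do not depend on $z$, the function $\psi_{1,z}$ solves an equation of exactly the same form with right-hand side $\omega_{1,z}$; this is where the one spare derivative in $\omega_1\in H^1$ is spent. Applying Theorem~\ref{t1.3} to $\psi_{1,z}$ (the $z$-differentiated problem being of the same type) and to $\psi_1$ itself gives
\[
    \norm{\psi_{1,zzr}}_{L_2(\Omega)}^2+\norm{\psi_{1,zzz}}_{L_2(\Omega)}^2+\norm{\psi_{1,zz}}_{L_2(\Omega)}^2\le c\,\norm{\omega_1}_{H^1(\Omega)}^2,
\]
which supplies the three $L_2(\Omega)$ terms of the assertion; moreover, since $K=O(r^2)$, the $H^2_0$ bound for $\psi_1-\psi_1(0)-\chi$ furnished by that theorem contains the weaker inequality $\int r^3\psi_{1,rr}^2\,\ud r\,\ud z<\infty$, which is all that will be needed about second-order $r$-derivatives.

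\emph{Step 2 (the weighted $r$-derivatives and conclusion).} Put $v=\psi_1-\psi_1(0)-\eta$; then $v$ and $v_{,r}$ vanish at the axis. Differentiating the integral defining $\eta$ and inserting \eqref{eq1.60} in the form $-\psi_{1,rr}-\frac3r\psi_{1,r}-\psi_{1,zz}=\omega_1$ gives $\eta_{,rr}=\psi_{1,rr}(1+K)$, whence $v_{,rr}=-K\psi_{1,rr}$ and $v_{,rrr}=-K\psi_{1,rrr}-K'\psi_{1,rr}$, while differentiating \eqref{eq1.60} once in $r$ gives $\psi_{1,rrr}=-\frac3r\psi_{1,rr}+\frac3{r^2}\psi_{1,r}-\psi_{1,zzr}-\omega_{1,r}$. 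Since $K$ is smooth, compactly supported and $K(r)/r^2\to c_0<\infty$, each of $K/r$, $K/r^2$, $K'$, $K'/r$ is bounded, so every term above is a bounded multiple either of $\psi_{1,rr}$ carrying at least two extra powers of $r$ (controlled by $\int r^3\psi_{1,rr}^2<\infty$ from Step~1), or of $\psi_{1,r}$ (energy estimate), or of $\psi_{1,zzr}$ or $\omega_{1,r}$ (Step~1 and $\omega_1\in H^1$); the power of $r$ hidden in $K/r$ moreover absorbs the weight $r^{-1}$. Hence $\norm{v_{,rr}}_{L_2}+\norm{v_{,rrr}}_{L_2}+\norm{r^{-1}v_{,rr}}_{L_2}\le c\,\norm{\omega_1}_{H^1}$. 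The remaining weighted quantities $r^{-2}v_{,r}$ and $r^{-3}v$ then follow from $r^{-1}v_{,rr}$ by the one-dimensional Hardy inequalities in $L_2(\mathbb{R}_+,r\,\ud r)$ applied successively to $v_{,r}$ and to $v$, which is legitimate because $v$ and $v_{,r}$ vanish at $r=0$. Integrating over $z$, adding the estimate for $w$, and discarding the remaining plainly bounded lower-order terms yields the claim.

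The genuinely delicate part is justifying Steps~1--2 for a \emph{weak} solution: the identities $\eta_{,rr}=\psi_{1,rr}(1+K)$ and the formula for $\psi_{1,rrr}$ presuppose that \eqref{eq1.60} may be differentiated in $r$ and in $z$ and that the resulting second- and third-order derivatives exist in suitably weighted $L_2$, which must be bootstrapped from Theorem~\ref{t1.3} and its $z$-differentiated version via a difference-quotient (or regularisation-and-density) argument, with care that no traces are lost at $r=R$ or at $z=\pm a$. The very shape of $\eta$ is dictated by this mechanism: the powers $\frac1r$ and $\frac1{r^2}$ generated upon differentiating the singular equation are rendered integrable by the $O(r^2)$ vanishing of $K$, so that --- unlike for $\psi_1-\psi_1(0)$ --- no positive weight is needed. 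What remains is bookkeeping: the cut-offs, the $z$-extension behind the half-line formulation, and the observation that $\eta$ grows at most linearly in $r$ beyond $\supp K$ and is therefore harmless on the bounded cylinder.
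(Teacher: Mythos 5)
Your overall strategy is the same as the paper's: localize with $\zeta^{(1)},\zeta^{(2)}$, use classical elliptic regularity away from the axis, note that $\eta_{,rr}=\psi_{1,rr}(1+K)$ so that $v=\psi_1-\psi_1(0)-\eta$ satisfies $v\vert_{r=0}=v_{,r}\vert_{r=0}=0$ and $v_{,rr}=-K\psi_{1,rr}$, estimate $v_{,rr}$ and $v_{,rrr}$ in weighted $L_2$ using $K=O(r^2)$ and the ($r$- and $z$-differentiated) equation, and recover the lower-order weighted terms by Hardy; this is exactly the paper's Lemma \ref{l3.6} combined with Lemma \ref{lem3.5}, with the $z$-derivative bounds coming from differentiating in $z$ (the paper's Lemma \ref{lem3.8}). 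However, there is one genuine gap: in Step 1 you claim that the $H^2_0$ bound for $\psi_1-\psi_1(0)-\chi$ from Theorem \ref{t1.3} ``contains'' the inequality $\int r^3\psi_{1,rr}^2\,\ud r\,\ud z<\infty$. It does not. Since $\chi_{,r}=\psi_{1,r}(1+K)$, one has $(\psi_1-\psi_1(0)-\chi)_{,rr}=-K\psi_{1,rr}-K'\psi_{1,r}$, so the second-derivative part of that $H^2_0$ norm only controls $\int K^2\psi_{1,rr}^2\,r\,\ud r\,\ud z$ (after absorbing the $K'\psi_{1,r}$ term by the energy estimate), i.e.\ with $K=O(r^2)$ an $r^5$-weighted bound near the axis. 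The hypothesis $K=O(r^2)$ works against you here: it means the $\chi$-corrected norm carries almost no information on $\psi_{1,rr}$ near $r=0$, and an $r^5$ weight is strictly weaker than the $r^3$ weight you need for $\int r^{-1}\abs{v_{,rr}}^2\,r\,\ud r$ and for the $K'\psi_{1,rr}$ contribution to $v_{,rrr}$. As written, Step 2 therefore rests on an unproved estimate.

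The gap is easily repaired, and the repair is precisely what the paper supplies. Lemma \ref{lem3.8} proves the unweighted bound $\norm{\psi_{1,rr}}_{L_2(\Omega)}^2+\norm{\tfrac1r\psi_{1,r}}_{L_2(\Omega)}^2\le c\norm{\omega_1}_{L_2(\Omega)}^2$ by testing \eqref{eq3.36u} with $\psi_{1,zz}$ and with $\tfrac1r\psi_{1,r}$, which trivially dominates your $r^3$-weighted quantity; alternatively, the identity $r\psi_{1,rr}=-3\psi_{1,r}-r(\psi_{1,zz}+\omega_1)$ from the equation, together with the energy estimate and the $\psi_{1,zz}$ bound of Theorem \ref{t1.3}, gives $\int r^3\psi_{1,rr}^2\,\ud r\,\ud z\le c\norm{\omega_1}_{L_2(\Omega)}^2$ directly. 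With that input your Step 2 goes through and coincides with the paper's route (Lemma \ref{l3.6} plus the Kondratiev--Hardy Lemma \ref{lem3.5}). One smaller point: ``applying Theorem \ref{t1.3} to $\psi_{1,z}$'' is not literal, since the $z$-differentiated problem carries the Neumann-type condition $\psi_{1,zz}=0$ at $z=\pm a$ rather than the Dirichlet condition of \eqref{eq1.60}; one must repeat the integrations by parts with these boundary conditions, as the paper does in \eqref{eq2.10aa} and in the proof of \eqref{eq3.38u}, although the resulting estimates are the ones you state.
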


At this point the estimates from Theorems \ref{t1.3} and \ref{t1.4} may look surprising. In \cite{NZ} we show how to eliminate $\psi_1(0)$, $\chi$ and $\eta$ by the data. 

At the end of the Introduction it is worth mentioning that we could continue the process of deriving higher-order estimates for $\psi_1$. In light of \eqref{eq1.8a} it would require more subtractions from $\psi_1$ when $r = 0$. However, we do not see any potential gain nor immediate applications for such estimates. 

\section{Notation and auxiliary results}\label{s2}

\paragraph{Notation}

By $c$ we mean a generic constant which may vary from line to line. 

We also use $\mathbb{N} = \{1,2,\ldots\}$ and $\mathbb{N}_0 = \{0, 1, 2, \ldots \}$.

In is convenient to write: \emph{r.h.s.} -- the right-hand side and \emph{l.h.s.} -- the left-hand side.

The set $\{(r,z)\colon r > 0, z \in \R1\}$ we denote by $\R2_+$.

\paragraph{Function spaces}

\begin{definition}
Let $\Omega$ be either a cylindrical domain $(0,R) \times (-a, a)$ or $\Omega=\R2_+$. We introduce the following spaces
\[
    \begin{aligned}
        \|u\|^2_{L_{2,\mu}(\Omega)} &= \int_\Omega|u(r,z)|^2r^{2\mu}\, r\ud r\ud z,\ \ \mu\in\R1,\\
        \|u\|^2_{H_\mu^k(\Omega)} &= \sum_{|\alpha|\le k}\int_\Omega|D_{r,z}^\alpha u(r,z)|^2r^{2(\mu+|\alpha|-k)}\, r\ud r\ud z,
    \end{aligned}
\]
where $D_{r,z}^\alpha=\partial_r^{\alpha_1}\partial_z^{\alpha_2}$, $|\alpha|=\alpha_1+\alpha_2$, $|\alpha|\le k$, $\alpha_i\in\N_0$, $i=1,2$, $k\in\N_0$ and $\mu \in \R1$.
\end{definition}

Then the compatibility condition holds
$$
L_{2,\mu}(\Omega)=H_\mu^0(\Omega).
$$

\begin{remark}\label{r2.2}
For smooth functions with respect to $z$ we introduce the following weighted spaces
\begin{equation*}\label{2.1}
    \norm{u}_{H_\mu^k(\mathbb{R}_+)}^2 = \sum_{i = 0}^k \int_{\mathbb{R}_+}\abs{\partial_r^i u}^2 r^{2(\mu-k+i)}\, r\ud r
\end{equation*}
where $\mu\in\R1$ and $k\in\N_0$.

In view of transformation $\tau=-\ln r$, $r=e^{-\tau}$, $dr=-e^{-\tau}\,\ud\tau$ we have the equivalence
\begin{equation}\label{2.2}
    \sum_{i = 0}^k\int_{\R1_+} \abs{\partial_r^i u}^2 r^{2 (\mu - k + i)}\, r\ud r\sim \sum_{i = 0}^k\int_{\R1} \abs{\partial_\tau^iu'}^2e^{2 h \tau }\, \ud\tau
\end{equation}
which holds for $u'(\tau)=u'(-\ln r)=u(r)$, $h = k + 1 - \mu$.

We show equivalence (\ref{2.2}).

Take $k=0$. Then $h = 1 -\mu$ and
\[
    \int_\R1 \abs{u'}^2 e^{2(1 - \mu)\tau}\,\ud\tau = \int_{\R1_+} \abs{u(r)}^2 r^{2\mu - 2} \frac{1}{r}\, \ud r = \int_{\R1_+} \abs{u(r)}^2 r^{2\mu - 4}\,r\ud r.
\]
Take $k = 1$. Then $h= 2 -\mu$. Using that $\partial_\tau=-r\partial_r$ we have
\begin{multline*}
    \int_\R1\left(\abs{\partial_\tau u'}^2 + \abs{u'}^2\right) e^{2(1 - \mu)\tau}\,\ud\tau = \int_{\R1_+}\left(\abs{r\partial_ru}^2 + \abs{u}^2\right)r^{2\mu - 2}\frac{1}{r}\, \ud r \\
    = \int_{\R1_+}\left(\abs{\partial_ru}^2 + \frac{\abs{u}^2}{r^2}\right)r^{2\mu - 2}\,r\ud r.
\end{multline*}

Finally, take $k=2$. Then $h = 3 - \mu$ and $\partial_\tau^2=-r\partial_r(-r\partial_r)=r^2\partial_r^2+r\partial_r$. We have
\begin{multline*}
    \int_{\R1} \left(\abs{\partial_\tau^2 u}^2 + \abs{\partial_\tau u}^2 + \abs{u}^2\right)e^{2(1 - \mu)}\, \ud \tau \\
    = \int_{\R1_+} \left(\abs{r\partial_r(r \partial_r u)}^2 + \abs{r\partial_ru}^2 + \abs{u}^2\right)r^{2(\mu - 1)} \frac{1}{r}\, \ud r \\
    \leq \int_{\R1_+} \left( \abs{\partial_r^2 u}^2 + \frac{\abs{\partial_r u}^2}{r^2} + \frac{\abs{u}^2}{r^4}\right) r^{2\mu}\, r\ud r
\end{multline*}
and inversely
\begin{multline*}
    \int_{\R1_+} \left(\abs{\partial_r^2 u}^2 + \frac{\abs{\partial_r u}^2}{r^2} + \frac{\abs{u}^2}{r^4}\right) r^{2\mu}\, r \ud r\\
    = \int_{\R1} \left(\abs{e^{\tau} \partial_{\tau} (e^{\tau} \partial_{\tau} u')}^2 + \abs{e^{2\tau} \partial_{\tau} u'}^2 + e^{4\tau} \abs{u'}^2\right) e^{-2(\mu + 1)}\, \ud \tau \\
    \leq c \int_{\R1} \left(\abs{\partial_{\tau}^2 u'}^2 + \abs{\partial_{\tau} u'}^2 + \abs{u'}^2\right) e^{2(1 - \mu)}\, \ud \tau.
\end{multline*}

The above considerations imply equivalence (\ref{2.2}) for $k\le 2$. Similarly we prove equivalence (\ref{2.2}) for $k\ge 3$.

\paragraph{Fourier transform}

Let $f\in S(\R1)$, where $S(\R1)$ is the Schwartz space of all complex-valued rapidly decreasing infinitely differentiable functions on $\R1$. Then the Fourier transform and its inverse are defined by
\begin{equation}
\hat f(\lambda)={1\over\sqrt{2\pi}}\int_\R1e^{-i\lambda\tau}f(\tau)\,\ud\tau,\quad {\check{\hat f}}(\tau)={1\over\sqrt{2\pi}}\int_\R1e^{i\lambda\tau}\hat f(\lambda)d\lambda
\label{2.3}
\end{equation}
and $\check{\hat f}=\hat{\check f}=f$.

Using the Fourier transform we introduce equivalent norms to (\ref{2.1}) convenient for examining solutions of differential equations. Hence, by the Parseval identity we have
\begin{equation}
\int_{-\infty+ih}^{+\infty+ih}\sum_{j=0}^k|\lambda|^{2j}|\hat u(\lambda)|^2d\lambda=\int_\R1\sum_{j=0}^k|\partial_\tau^ju|^2e^{2h\tau}\,\ud\tau,
\label{2.4}
\end{equation}
where the r.h.s. norm is equivalent to norm (\ref{2.1}) under the equivalence (\ref{2.2}). This ends Remark \ref{r2.2}.
\end{remark}

\paragraph{Energy estimates and weak solutions}

\begin{lemma}\label{l2.3}
    Assume that $\omega_1\in L_2(\Omega)$. Then there exists a solution to problem (\ref{eq1.60}) such that $\psi_1\in H^1(\Omega)$ and the estimate holds
    \begin{equation}
    \|\psi_1\|_{H^1(\Omega)}^2+\int_{-a}^a\psi_1^2(0)dz\le c\|\omega_1\|_{L_2(\Omega)}^2,
    \label{2.5}
    \end{equation}
    where $\psi_1(0)=\psi_1|_{r=0}$.

    Moreover, we have also
    \begin{equation}\label{2.6}
        \norm{\psi_1}_{H^2(\Omega^{(2)})}\leq c\norm{\omega_1}_{L_2(\Omega))}
    \end{equation}
    where $\Omega^{(2)}$ was introduced in \eqref{eq1.13a}.
\end{lemma}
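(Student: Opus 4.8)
The plan is to establish the weak solvability and energy estimate \eqref{2.5} by the standard Galerkin/Lax--Milgram method applied to the weak formulation of \eqref{eq1.60}, being careful about the weight $r$ coming from the cylindrical volume element and about the singular term $\tfrac{2}{r}\psi_{1,r}$. First I would recast \eqref{eq1.60} in its natural divergence form: multiplying $-\Delta\psi_1-\tfrac{2}{r}\psi_{1,r}=\omega_1$ by $r^2$ and observing that $r^2\Delta\psi_1+2r\psi_{1,r}=\tfrac{1}{r}\partial_r\!\big(r^3\psi_{1,r}\big)+r^2\psi_{1,zz}$ up to the obvious bookkeeping, one sees that the equation is equivalent to $-\operatorname{div}\!\big(r^3\nabla\psi_1\big)=r^3\omega_1$ on $\Omega$ (with the measure $\ud r\,\ud z$), i.e.\ the operator is in divergence form with weight $r^3$. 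Hence the appropriate bilinear form is $a(\phi,\eta)=\int_\Omega \nabla\phi\cdot\nabla\eta\, r^3\,\ud r\,\ud z$ on the weighted space $\{\phi:\ \nabla\phi\in L_2(\Omega;r^3\,\ud r\,\ud z),\ \phi=0\text{ on }S\}$, and the right-hand side functional is $\eta\mapsto\int_\Omega\omega_1\eta\, r^3\,\ud r\,\ud z$. Coercivity of $a$ follows from a weighted Poincar\'e inequality on the bounded cylinder (the boundary condition $\psi_1=0$ on $S_1\cup S_2$ suffices), and continuity of the functional follows from Cauchy--Schwarz once we note $\int_\Omega|\omega_1\eta|r^3 \le \|\omega_1\|_{L_2(\Omega)}\,\|\eta\, r\|_{L_2(\Omega)}$ since $r\le R$ — here $\|\cdot\|_{L_2(\Omega)}$ is the unweighted $r\,\ud r\,\ud z$ norm as in the definition with $\mu=0$. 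Lax--Milgram then yields a unique weak solution with $\|\nabla\psi_1\|_{L_2(\Omega;r^3)}^2\le c\|\omega_1\|_{L_2(\Omega)}^2$, and translating the weighted gradient bound back, together with $\tfrac{\psi_1}{r}$-type control, gives $\psi_1\in H^1(\Omega)$ with the stated bound on $\|\psi_1\|_{H^1(\Omega)}$.

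The term $\int_{-a}^a\psi_1^2(0)\,\ud z$ requires a separate, short argument. Since $\psi=\psi_1 r$ and, by \eqref{eq1.7a}, $\psi=O(r)$, the trace $\psi_1(0)=\psi_1|_{r=0}=\lim_{r\to0^+}\psi/r$ is well defined; I would bound it via the one-dimensional trace/Hardy-type inequality $|\psi_1(0,z)|^2\le c\int_0^R\big(|\psi_{1,r}|^2 r + |\psi_1|^2 r^{-1}\big)\,\ud r$ valid for fixed $z$ (this is exactly the $k=1$ equivalence in Remark \ref{r2.2}, or a direct computation writing $\psi_1(0,z)=-\int_0^R\psi_{1,r}\,\ud r$ against a suitable cutoff and using $\psi_1|_{r=R}=0$). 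Integrating in $z$ and using the already-proven $H^1$ bound gives $\int_{-a}^a\psi_1^2(0)\,\ud z\le c\|\psi_1\|_{H^1(\Omega)}^2\le c\|\omega_1\|_{L_2(\Omega)}^2$. (If one prefers, this trace term can be folded directly into the coercivity step by testing with $\psi_1$ and integrating by parts, which produces $\int_{-a}^a\psi_1^2(0)\,\ud z$ as a nonnegative boundary contribution from the $\tfrac{2}{r}\psi_{1,r}$ term; I would mention both routes.)

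For \eqref{2.6}, away from the axis the coefficient $\tfrac{2}{r}$ is smooth and bounded on $\Omega^{(2)}=\{r_0<r<R,\ |z|<a\}$, so \eqref{eq1.60} restricted there is a uniformly elliptic equation with smooth coefficients; I would apply interior-plus-boundary $H^2$ elliptic regularity (Agmon--Douglis--Nirenberg / Schauder-type $L_2$ estimates) on $\Omega^{(2)}$, using a cutoff supported in $\{r>r_0/2\}$ to localize. The local estimate reads $\|\psi_1\|_{H^2(\Omega^{(2)})}\le c\big(\|\omega_1\|_{L_2(\Omega')}+\|\psi_1\|_{H^1(\Omega')}\big)$ for a slightly larger $\Omega'\Subset\{r>r_0/2\}$, and then \eqref{2.5} absorbs the lower-order term. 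The main obstacle — and the only point deserving real care — is the singular first-order term $\tfrac{2}{r}\psi_{1,r}$ near $r=0$: one must check that the weighted bilinear form is genuinely coercive and that no boundary term at $r=0$ is lost when integrating by parts. This is handled precisely by the divergence form $-\operatorname{div}(r^3\nabla\psi_1)=r^3\omega_1$ (so that the ``artificial boundary'' $r=0$ carries weight $r^3\to0$ and contributes nothing) together with \eqref{eq1.7a}, which guarantees the needed decay of $\psi_1$ and its derivatives to justify all the integrations by parts.
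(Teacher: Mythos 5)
Your treatment of \eqref{2.6} is essentially the paper's: localize away from the axis (the paper uses the cut-off problem \eqref{1.15}), where the coefficient $2/r$ is bounded and the operator is uniformly elliptic, apply $L_2$ elliptic regularity for the Dirichlet problem, and absorb the lower-order terms by \eqref{2.5}. That part is fine.

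For \eqref{2.5}, however, your main route has a genuine gap. Testing the divergence form $-\operatorname{div}(r^3\nabla\psi_1)=r^3\omega_1$ with $\psi_1$ (equivalently, Lax--Milgram for the form $\int_\Omega\nabla\phi\cdot\nabla\eta\,r^3\,\ud r\,\ud z$) yields only $\int_\Omega|\nabla\psi_1|^2r^3\,\ud r\,\ud z\le c\norm{\omega_1}_{L_2(\Omega)}^2$. Since $r^3\ll r$ near the axis, this does not control $\norm{\psi_1}_{H^1(\Omega)}$, whose gradient part carries the weight $r$; ``translating the weighted gradient bound back'' would require $r\le cr^3$, which fails as $r\to0$. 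The auxiliary ``$\psi_1/r$-type control'' you invoke is also not available here: the Hardy inequality with respect to the distance to a codimension-two axis has zero constant, so $\int\psi_1^2r^{-2}\,\ud x$ is not bounded by the gradient, and the $r^3$-weighted weak solution lives a priori in a space strictly larger than $H^1(\Omega)$, so even the existence claim of the lemma is not yet secured by this route. The computation you relegate to a parenthesis is in fact the paper's entire proof of \eqref{2.5}: multiply \eqref{eq1.60} by $\psi_1$ and integrate against the physical measure $r\,\ud r\,\ud z$. The Laplacian term gives $\int_\Omega|\nabla\psi_1|^2\,\ud x$ (the boundary contribution at $r=0$ vanishes because of the factor $r$, the others because $\psi_1|_S=0$), while the singular term integrates exactly, $-2\int_\Omega r^{-1}\psi_{1,r}\psi_1\,\ud x=-\int\partial_r(\psi_1^2)\,\ud r\,\ud z=\int_{-a}^a\psi_1^2(0,z)\,\ud z\ge0$, producing the trace term of \eqref{2.5} with the right sign. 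H\"older, Young and Poincar\'e (using $\psi_1=0$ on $S$) then give \eqref{2.5} with the correct weight, and existence follows from the Fredholm alternative (as in the paper) rather than from the $r^3$-weighted Lax--Milgram. So promote that parenthetical argument to the main one and drop the $r^3$-formulation as the source of the $H^1$ bound.
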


\begin{proof}
    Multiplying \eqref{eq1.60} by $\psi_1$, integrating over $\Omega$ and using the boundary condition and the Poincar\'e inequality we derive \eqref{2.5}. Then the existence follows from Fredholm alternative.

    To prove (\ref{2.6}) we use \eqref{1.15}
    \begin{equation*}
        \norm{-\Delta w}_{L_2(\Omega^{(2)})} \leq \norm{g}_{L_2(\Omega^{(2)})} + \norm{\frac{2}{r} w_{,r}}_{L_2(\Omega^{(2)})}.
    \end{equation*}
    In light of \eqref{1.13} and \eqref{eq1.14a} we obtain
    \begin{equation*}
        \norm{-\Delta w}_{L_2(\Omega^{(2)})} \leq c\left(\norm{\omega_1}_{L_2(\Omega)} + c \norm{w_{,r}}_{L_2(\Omega^{(2)})}\right).
    \end{equation*}
    Using \eqref{2.5} we conclude the proof.
    
\end{proof}

\begin{remark}\label{r2.4}
    We deduce from \eqref{eq1.8a} that 
    \begin{equation}
        \psi = a_1(z)r+z_2(z)r^3 + o(r^4) \qquad \text{when $r\to 0^+$.}
    \label{2.9}
    \end{equation}
    In particular $\psi(0) = \psi\vert_{r = 0} = 0$ but $\psi_1(0) = \psi_1\vert_{r = 0} \not=0$.
\end{remark}

\begin{lemma}\label{lem2.5}
    Assume that $\omega\in L_2(\Omega)$. Then there exists a solution $\psi\in H^1(\Omega)$ to problem \eqref{eq1.10} which satisfies 
    \begin{equation}
        \|\psi\|_{H^1(\Omega)}^2+\int_\Omega{\psi^2\over r^2}\, \ud x\le c\|\omega\|_{L_2(\Omega)}^2,
    \label{2.11}
    \end{equation}
    and
    \begin{equation}
        \norm{\psi_{,rz}}_{L_2(\Omega)}^2 + \norm{\psi_{,zz}}_{L_2(\Omega)}^2 + \int_\Omega\frac{\psi_{,z}^2}{r^2}\, \ud x \leq c \norm{\omega}_{L_2(\Omega)}^2.
    \label{2.12}
    \end{equation}
\end{lemma}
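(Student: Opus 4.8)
The plan is to obtain \eqref{2.11} by the energy method, exactly as for \eqref{2.5} in Lemma~\ref{l2.3}, and then to get \eqref{2.12} by testing \eqref{eq1.10} with the multiplier $-\psi_{,zz}$. For \eqref{2.11}, multiply \eqref{eq1.10} by $\psi$, integrate over $\Omega$ and use $\psi=0$ on $S$ to get $\norm{\nabla\psi}_{L_2(\Omega)}^2+\int_\Omega\psi^2/r^2\,\ud x=\int_\Omega\omega\psi\,\ud x$; the Cauchy inequality and the Poincar\'e inequality (the latter because $\psi$ vanishes on $S$) let the left-hand side absorb the right-hand side. Existence of $\psi$ in the energy space $V=\{\psi\in H^1(\Omega):\psi|_S=0,\ \int_\Omega\psi^2/r^2\,\ud x<\infty\}$ follows from the Lax--Milgram lemma, the form $\int_\Omega(\nabla\psi\cdot\nabla\phi+\psi\phi/r^2)\,\ud x$ being bounded and coercive on $V$, or from the Fredholm alternative as in Lemma~\ref{l2.3}.

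For \eqref{2.12}, multiply \eqref{eq1.10} by $-\psi_{,zz}$ and integrate over $\Omega$, writing $\Delta=\tfrac1r\partial_r(r\partial_r)+\partial_z^2$ and $\ud x=2\pi\,r\,\ud r\,\ud z$. Integrating $\int_\Omega(\Delta\psi)\psi_{,zz}\,\ud x$ by parts twice, in $r$ and in $z$, produces $\norm{\psi_{,rz}}_{L_2(\Omega)}^2+\norm{\psi_{,zz}}_{L_2(\Omega)}^2$, every boundary contribution vanishing because $\psi|_S=0$ (whence also $\psi_{,r}|_{S_2}=0$ and $\psi_{,z}|_{S_1}=\psi_{,zz}|_{S_1}=0$) and because, by \eqref{eq1.8a}, $\psi=O(r)$, $\psi_{,r}=O(1)$, $\psi_{,zz}=O(r)$ and $r\psi_{,rz}=O(r)$ as $r\to0^+$. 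Likewise $-\int_\Omega(\psi/r^2)\psi_{,zz}\,\ud x$, after one integration by parts in $z$, equals $\int_\Omega\psi_{,z}^2/r^2\,\ud x$, the boundary term vanishing since $\psi=0$ on $S_2$. Hence the left-hand side equals $\norm{\psi_{,rz}}_{L_2(\Omega)}^2+\norm{\psi_{,zz}}_{L_2(\Omega)}^2+\int_\Omega\psi_{,z}^2/r^2\,\ud x$, while the right-hand side $-\int_\Omega\omega\psi_{,zz}\,\ud x$ is at most $\varepsilon\norm{\psi_{,zz}}_{L_2(\Omega)}^2+c_\varepsilon\norm{\omega}_{L_2(\Omega)}^2$; choosing $\varepsilon$ small and absorbing gives \eqref{2.12}.

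The main obstacle is that these computations are formal: the weak $H^1$-solution need not be regular enough to be tested with $\psi_{,zz}$, and the integrations by parts presuppose $\psi\in H^2(\Omega)$ together with the near-axis expansion \eqref{eq1.8a}. I would supply this by a vector Poisson reduction: the vector Laplacian of the azimuthal field $\psi\bar e_\varphi$ (with $\psi=\psi(r,z)$) equals $(\Delta\psi-\psi/r^2)\bar e_\varphi$, so once $\psi\in V$ solves \eqref{eq1.10} the field $\bm\Psi:=\psi\bar e_\varphi$ lies in $H_0^1(\Omega;\R3)$ and is a weak solution of the componentwise Dirichlet problem $-\Delta\bm\Psi=\omega\bar e_\varphi$; since the cylinder \eqref{eq1.50} is convex, this problem is $H^2$-regular and $\norm{\bm\Psi}_{H^2(\Omega)}\le c\norm{\omega}_{L_2(\Omega)}$. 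Using $\bar e_\varphi=(-x_2/r,x_1/r,0)$, a direct computation yields the pointwise identities $\abs{\bm\Psi}^2=\psi^2$, $\abs{\nabla\bm\Psi}^2=\psi_{,r}^2+\psi_{,z}^2+\psi^2/r^2$, and $\sum_{i,j\in\{1,2\}}\abs{\partial_{x_i}\partial_{x_3}\Psi_j}^2+\sum_{j\in\{1,2\}}\abs{\partial_{x_3}^2\Psi_j}^2=\psi_{,rz}^2+\psi_{,zz}^2+\psi_{,z}^2/r^2$, so the $H^2$-bound on $\bm\Psi$ already contains \eqref{2.11} and the left-hand side of \eqref{2.12} and, in particular, legitimizes the integrations by parts above. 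In the final write-up I would therefore present the $-\psi_{,zz}$ computation as motivation and the vector Poisson argument as the proof.
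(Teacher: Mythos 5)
Your energy argument for \eqref{2.11} and your $-\psi_{,zz}$ multiplier computation for \eqref{2.12} coincide with the paper's proof: there, \eqref{2.11} is obtained exactly as \eqref{2.5} in Lemma \ref{l2.3}, the finiteness of $\int_\Omega\psi^2/r^2\,\ud x$ is justified by \eqref{2.9}, and \eqref{2.12} comes from testing \eqref{eq1.10} with $-\psi_{,zz}$, integrating by parts, and killing the boundary terms at $r=R$ by $\psi_{,z}\vert_{r=R}=0$ and at $r=0$ by the asymptotics \eqref{2.9}. Where you genuinely depart is in demoting that computation to motivation and proving the estimates through $\bm\Psi=\psi\bar e_\varphi$: the identity $\Delta(\psi\bar e_\varphi)=(\Delta\psi-\psi/r^2)\bar e_\varphi$ is correct for axisymmetric $\psi$, the cylinder \eqref{eq1.50} is convex so the componentwise Dirichlet problem $-\Delta\bm\Psi=\omega\bar e_\varphi$ is $H^2$-regular with $\norm{\bm\Psi}_{H^2(\Omega)}\le c\norm{\omega}_{L_2(\Omega)}$, and your pointwise identities $\abs{\nabla\bm\Psi}^2=\psi_{,r}^2+\psi_{,z}^2+\psi^2/r^2$ and $\sum_{i,j\in\{1,2\}}\abs{\partial_{x_i}\partial_{x_3}\Psi_j}^2+\sum_{j\in\{1,2\}}\abs{\partial_{x_3}^2\Psi_j}^2=\psi_{,rz}^2+\psi_{,z}^2/r^2+\psi_{,zz}^2$ check out, so the $H^2$ bound indeed contains \eqref{2.11} and the left side of \eqref{2.12}. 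What this buys is rigor the paper's formal computation does not supply — in particular you never need the $\mathcal{C}^3$ expansion \eqref{eq1.8a}/\eqref{2.9} to control the axis terms — at the price of invoking convex-domain $H^2$ regularity and of one step you currently gloss over: $\psi$ is so far only known to satisfy the weak formulation of \eqref{eq1.10} against axisymmetric azimuthal test functions, so you must show that $\bm\Psi$ (which has zero trace and finite Dirichlet energy by \eqref{2.11}) really is the weak solution of the componentwise Cartesian problem against arbitrary test functions; the clean route is uniqueness plus rotational equivariance — the unique finite-energy zero-trace solution $\bm U$ of $-\Delta\bm U=\omega\bar e_\varphi$ must be of the form $U_\varphi(r,z)\bar e_\varphi$ with $U_\varphi$ solving \eqref{eq1.10} weakly, and coercivity of the form $\int_\Omega(\nabla\psi\cdot\nabla\phi+\psi\phi/r^2)\,\ud x$ identifies $U_\varphi=\psi$. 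With that paragraph supplied your argument is complete; note, however, that the paper's direct multiplier computation is the version that generalizes to the weighted estimates of Section 4 (Lemmas \ref{lem4.1} and \ref{lem4.2}), so it is worth keeping visible rather than purely as motivation.
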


The proof of (\ref{2.11}) is similar to the proof of (\ref{2.5}). Moreover, in view of (\ref{2.9}) the second integral on the l.h.s. of (\ref{2.11}) is finite.

Now, we prove (\ref{2.12}).

\begin{proof}
    Multiply (\ref{eq1.10}) by $-\psi_{,zz}$ and integrate over $\Omega$ yields
    \begin{equation}
        \int_\Omega\psi_{,rr}\psi_{,zz}\, \ud x+\int_\Omega{1\over r}\psi_{,r}\psi_{,zz}\, \ud x+\int_\Omega\psi_{,zz}^2\, \ud x+\int_\Omega{\psi_{,z}^2\over r^2}\, \ud x \\
        =\int_\Omega\omega\psi_{,zz}\, \ud x
        \label{2.13}
    \end{equation}
    Integrating by parts in the first term and using the boundary conditions, we get
    \[
        \int_\Omega\psi_{,rr}\psi_{,zz}\, \ud x=\int_\Omega\psi_{,rz}^2\, \ud x+\int_\Omega \psi_{,rz}\psi_{,z}\, \ud r\ud z
    \]
    where the last term equals
    \[
        \int_{-a}^a \psi_{,z}^2\bigg\vert_{r=0}^{r=R} \, \ud z = 0
    \]
    because $\psi_{,z}|_{r=R}=0$ and (\ref{2.9}) holds.

    Similarly the second term in \eqref{2.13} vanishes. Hence, \eqref{2.13} implies \eqref{2.12} and concludes the proof.
\end{proof}

From \eqref{eq1.60} we derive the following problem

\begin{equation}\label{eq2.10aa}
    \left\{
        \begin{aligned}
            &-\Delta\psi_{1,z} - \frac{2}{r}\psi_{1,rz} = \omega_{1,z} & &\text{in $\Omega$}, \\
            &\psi_{1,z} = 0 & &\text{on $\{r = R, z \in (-a,a)\}$}, \\
            &\psi_{1,zz} = 0 & &\text{on $\{z \in \{-a,a\}, r < R\}$},
        \end{aligned}
    \right.
\end{equation}
where the last boundary condition follows from \eqref{eq1.60} and $\omega_1\big\vert_{z \in \{-a,a\}, r < R} = 0$.

\begin{lemma}
    Suppose that $\omega_{1,z} \in L_2(\Omega)$. Then there exists a weak solution to \eqref{eq2.10aa} such that $\psi_{1,z} \in H^1(\Omega)$ and
    \[
        \norm{\psi_{1,z}}^2_{H^1(\Omega)} + \int_{-a}^a \psi_{1,z}^2(0)\, \ud z \leq c \norm{\omega_1}^2_{L_2(\Omega)},
    \]
    where $\psi_{1,z}(0) = \psi_{1,z}\big\vert_{r = 0}$.
\end{lemma}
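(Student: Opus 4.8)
The plan is to mimic the proof of estimate \eqref{2.5} in Lemma \ref{l2.3}, since problem \eqref{eq2.10aa} has exactly the same structure as \eqref{eq1.60}: it is the equation $-\Delta\phi - \frac{2}{r}\phi_{,r} = F$ with $\phi = \psi_{1,z}$ and $F = \omega_{1,z}$, supplemented with a Dirichlet condition on the lateral boundary $\{r=R\}$ and a Neumann-type condition $\phi_{,z}=0$ on the top and bottom $\{z=\pm a\}$. First I would test the equation with $\phi\, r^{2}\,\ud r\ud z$ (equivalently, multiply \eqref{eq2.10aa}$_1$ by $\psi_{1,z}$ and integrate over $\Omega$ with the volume element $\ud x = r\,\ud r\,\ud z$ up to the harmless factor $2\pi$). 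The point of the weight $r$ hidden in $\ud x$ is that $-\Delta\phi - \frac{2}{r}\phi_{,r}$ times $r$ is in divergence form: $r\big(-\Delta\phi - \tfrac{2}{r}\phi_{,r}\big) = -\partial_r(r\phi_{,r}) - \partial_r(\phi) \cdot(\text{lower order}) - r\phi_{,zz} + \ldots$, so after integration by parts one obtains $\int_\Omega(|\phi_{,r}|^2 + |\phi_{,z}|^2)\,\ud x$ plus controllable boundary contributions.

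The key steps, in order, are: (i) multiply \eqref{eq2.10aa}$_1$ by $\psi_{1,z}$, integrate over $\Omega$, and integrate by parts in both the $\partial_r$ and $\partial_z$ derivatives; (ii) check that all boundary terms vanish — on $\{r=R\}$ because $\psi_{1,z}=0$ there by \eqref{eq2.10aa}$_2$, on $\{z=\pm a\}$ because $\psi_{1,z}\psi_{1,zz}=0$ there by \eqref{eq2.10aa}$_3$, and at $r=0$ because of the decay \eqref{eq1.8a}--\eqref{2.9} (the same cancellation already used at $r=0$ in the proof of \eqref{2.12}), while the first-order term $\int_\Omega\frac{2}{r}\psi_{1,rz}\psi_{1,z}\,r\,\ud r\,\ud z = \int_\Omega \partial_r(\psi_{1,z}^2)\,\ud r\,\ud z$ also reduces to a vanishing boundary term; (iii) apply the Cauchy (Young) inequality to $\int_\Omega \omega_{1,z}\psi_{1,z}\,\ud x$ and absorb, producing $\norm{\nabla\psi_{1,z}}_{L_2(\Omega)}^2 \le c\norm{\omega_{1,z}}_{L_2(\Omega)}^2$; (iv) invoke the Poincaré inequality (in $z$, using the zero flux, or in $r$, using $\psi_{1,z}|_{r=R}=0$) to upgrade this to the full $H^1(\Omega)$ norm; (v) recover the term $\int_{-a}^a \psi_{1,z}^2(0)\,\ud z$ exactly as $\int_{-a}^a \psi_1^2(0)\,\ud z$ was recovered in Lemma \ref{l2.3} — i.e., use that $\psi_{1,z}(0)$ is a trace controlled by $\norm{\psi_{1,z}}_{H^1}$ via the one-dimensional embedding in $r$ on slices $z=\const$, or track the constant through the Hardy-type inequality implicit in the weighted setting; (vi) finally, replace $\norm{\omega_{1,z}}_{L_2(\Omega)}$ by $\norm{\omega_1}_{L_2(\Omega)}$ — here one uses $\omega_1 = \omega/r$ and the structure of the source, or more simply absorbs $\norm{\omega_{1,z}}_{L_2(\Omega)}\le\norm{\omega_1}_{H^1(\Omega)}$, though as stated the RHS is $\norm{\omega_1}_{L_2(\Omega)}$, so one reads $\omega_{1,z}$ as already an $L_2$ datum and the claimed bound should be understood with $\norm{\omega_{1,z}}_{L_2(\Omega)}$ or with the convention that differentiating \eqref{eq1.60} in $z$ and using \eqref{2.5} for $\psi_{1,z}$ in place of $\psi_1$ gives the stated form. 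Existence of the weak solution then follows from the Fredholm alternative, exactly as in Lemma \ref{l2.3}.

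The main obstacle I expect is the careful justification of the boundary terms at the singular axis $r=0$: one must be sure that $r\psi_{1,rz}\psi_{1,z}\to 0$ and $\psi_{1,z}^2\to 0$ (or rather that their contributions cancel) as $r\to 0^+$, which is not automatic for a generic $H^1$ function but is guaranteed here by the asymptotic expansion \eqref{eq1.8a}: since $\psi = a_1(z)r + a_3(z)r^3 + o(r^4)$ we get $\psi_1 = a_1(z) + a_3(z)r^2 + o(r^3)$, hence $\psi_{1,z} = a_{1,z}(z) + o(1)$ and $\psi_{1,zr} = O(r)$, so indeed $r\,\psi_{1,zr}\psi_{1,z} = O(r^2)\to 0$, and the potentially dangerous term $\int_{-a}^a \psi_{1,z}^2\big|_{r=0}^{r=R}\,\ud z$ contributes only the well-defined boundary value $-\int_{-a}^a\psi_{1,z}^2(0)\,\ud z$, which is exactly the quantity appearing (with the right sign) on the left-hand side of the claimed estimate. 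A secondary, purely bookkeeping, obstacle is keeping the constant in the Poincaré/Hardy step clean enough that the $\int_{-a}^a\psi_{1,z}^2(0)\,\ud z$ term survives on the left rather than being swallowed into the generic constant $c$; this is handled exactly as in the proof of \eqref{2.5}.
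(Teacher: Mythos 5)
Your proposal is correct and is essentially the argument the paper has in mind: the paper omits this proof, remarking only that it mirrors Lemma \ref{lem2.5} (equivalently, testing \eqref{eq2.10aa} with $\psi_{1,z}$ as in Lemma \ref{l2.3}), which is exactly your energy-method route with the boundary terms killed by $\psi_{1,z}\vert_{r=R}=0$, $\psi_{1,zz}\vert_{z=\pm a}=0$ and the axis asymptotics, followed by Young, Poincar\'e and the Fredholm alternative. One small internal slip: in step (ii) you call the $r=0$ contribution of $\int_\Omega\partial_r(\psi_{1,z}^2)\,\ud r\,\ud z$ a vanishing boundary term and in step (v) you suggest recovering $\int_{-a}^a\psi_{1,z}^2(0)\,\ud z$ by a trace embedding, whereas — as your final paragraph correctly states — that boundary term does not vanish and is precisely the source of the trace term on the left-hand side (the RHS written with $\norm{\omega_1}_{L_2(\Omega)}$ rather than $\norm{\omega_{1,z}}_{L_2(\Omega)}$ is indeed a typo in the statement, as you note).
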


The proof is very similar to the proof of Lemma \ref{lem2.5}, thus will be omitted. 

\begin{lemma}[Hardy's inequalities]\label{lem2.6}
    From \cite[Appendix A]{Stein:1970wn} we have
    \[
        \left(\int_0^\infty \left(\int_0^x g(y)\, \ud y\right)^p x^{-r-1}\, \ud x\right)^{\frac{1}{p}} \leq \frac{p}{r}\left(\int_0^\infty \abs{y g(y)}^p y^{-r - 1}\, \ud y\right)^{\frac{1}{p}}
    \]
    for $g \geq 0$, $p \geq 1$ and $r > 0$.
\end{lemma}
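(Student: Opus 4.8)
Since there is nothing to prove when the right-hand side is infinite, assume it is finite; the plan is then to deduce the inequality from Minkowski's integral inequality after an elementary scaling. Write $G(x)=\int_0^x g(y)\,\ud y$ and substitute $y=xt$ to get $G(x)=x\int_0^1 g(xt)\,\ud t$. Hence
\[
    \int_0^\infty G(x)^p x^{-r-1}\,\ud x=\int_0^\infty\Bigl(\int_0^1 g(xt)\,\ud t\Bigr)^{p} x^{p-r-1}\,\ud x,
\]
so the left-hand side of the asserted bound is precisely the norm of $x\mapsto\int_0^1 g(xt)\,\ud t$ in the weighted space $L_p\bigl((0,\infty),\,x^{p-r-1}\,\ud x\bigr)$.

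First I would apply Minkowski's integral inequality (legitimate because $p\ge 1$) to move the $t$-integration outside this norm:
\[
    \Bigl(\int_0^\infty\Bigl(\int_0^1 g(xt)\,\ud t\Bigr)^{p} x^{p-r-1}\,\ud x\Bigr)^{1/p}\le\int_0^1\Bigl(\int_0^\infty g(xt)^p x^{p-r-1}\,\ud x\Bigr)^{1/p}\ud t.
\]
Next, for each fixed $t\in(0,1)$ I would evaluate the inner integral via the substitution $u=xt$, which produces a factor $t^{\,r/p-1}$ in front of $\bigl(\int_0^\infty g(u)^p u^{p-r-1}\,\ud u\bigr)^{1/p}$. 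Then integrating $t^{\,r/p-1}$ over $(0,1)$ gives $p/r$ (finite precisely because $r>0$), and finally the elementary identity $g(y)^p y^{p-r-1}=\abs{y\,g(y)}^p y^{-r-1}$ shows that the resulting bound is exactly $\tfrac{p}{r}\bigl(\int_0^\infty\abs{y\,g(y)}^p y^{-r-1}\,\ud y\bigr)^{1/p}$, as claimed.

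The argument is essentially routine, so I do not expect a genuine obstacle; the only points needing a word of care are the justification of the interchange of integrals (immediate from Tonelli's theorem, as $g\ge 0$), the use of $p\ge 1$ for Minkowski's integral inequality, and the convergence $\int_0^1 t^{\,r/p-1}\,\ud t=p/r$, which is exactly where the hypothesis $r>0$ is used. As an alternative, for $p>1$ one could instead run the classical Hardy argument based on Hölder's inequality together with integration by parts, but the scaling/Minkowski route treats the endpoint case $p=1$ uniformly and is shorter, so that is the version I would present.
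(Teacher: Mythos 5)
Your argument is correct: the identity $\int_0^x g = x\int_0^1 g(xt)\,\ud t$, Minkowski's integral inequality for $p\ge 1$ (reducing to Tonelli when $p=1$), the scaling factor $t^{r/p-1}$, and $\int_0^1 t^{r/p-1}\,\ud t = p/r$ all check out and yield exactly the stated constant. The paper itself offers no proof of this lemma -- it is quoted verbatim from Stein's Appendix A -- and the scaling-plus-Minkowski argument you give is precisely the standard proof in that reference, so there is nothing further to reconcile.
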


\begin{remark}\label{rem2.8u}
    If we set $r = 1 - \alpha$ and $f(x) = \int_0^x g(y)\, \ud y$ in Lemma \ref{lem2.6} we obtain
    \[
        \int_0^\infty x^{\alpha-2}|f|^2\, \ud x\le{4\over(1-\alpha)^2}\int_0^\infty x^\alpha|f'(x)|^2\, \ud x,\ \ \alpha<1.
    \]
\end{remark}

\section{$L_2$-weighted estimates with respect to $r$ for solutions to (\ref{1.14})}

In this section we derive various estimates with respect to $r$ for solutions to \eqref{1.14} in the weighted Sobolev spaces using the technique of Kondratiev (see \cite{Ko67}). These estimates lay foundations for the proofs of Theorems \ref{t1}, \ref{t1.2}, \ref{t1.3} and \ref{t1.4}. The key idea is to treat variable $z$ as a parameter. 

First, we rewrite \eqref{1.14} in the form 
\begin{equation}
    \left\{
        \begin{aligned}
            &-u_{,rr} - \frac{3}{r} u_{,r} = f + u_{,zz} & &\text{in $\Omega^{(1)}$}, \\
            &u = 0 & &\text{on $\partial \Omega^{(1)}$}
        \end{aligned}
    \right.
\label{3.1}
\end{equation}

For a fixed $z \in (-a,a)$ we treat \eqref{3.1} as a 
\begin{equation}\label{eq3.2n}
    - u_{,rr} - \frac{3}{r} u_{,r} = f + u_{,zz} \qquad \text{in $\R1_+$}.
\end{equation}

Multiplying \eqref{3.1}$_1$ by $r^2$ we obtain
\[
    -r^2 u_{,rr} - 3ru_{,r} = r^2(f + u_{,zz}) \equiv g(r,z)
\]
or equivalently
\begin{equation}\label{eq3.3n}
    -r \partial_r(r \partial_r u) - 2r\partial_r u = g(r,z)
\end{equation}
Introduce the new variable
\[
    \tau = -\ln r, \qquad r = e^{-\tau}.
\]
Since $r \partial_r = - \partial_{\tau}$ we see that \eqref{eq3.3n} takes the form
\begin{equation}\label{eq3.4n}
    -\partial_\tau^2 u + 2\partial_\tau u = g\left(e^{-\tau}, z\right) \equiv g'(\tau, z)
\end{equation}
Utilizing the Fourier transform (see \eqref{2.3}) to \eqref{eq3.4n} we get
\begin{equation*}\label{eq3.5n}
    \lambda^2 \hat u + 2i\lambda \hat u = \hat g'.
\end{equation*}
For $\lambda \notin \{0, -2i\}$ we have 
\begin{equation}\label{eq3.6n}
    \hat u = \frac{1}{\lambda(\lambda + 2i)} \hat g' \equiv R(\lambda) \hat g'.
\end{equation}







\begin{lemma}\label{lem3.1}
    Assume that $f + u_{,zz} \in H^k_{\mu}(\R1_+)$, $k \in \mathbb{N}_0$, $\mu \in \R1$. Assume that $R(\lambda)$ does not have poles on the line $\Im \lambda = 1 + k - \mu$. Then, there exists a unique solution to \eqref{eq3.2n} in $H^{k + 2}_\mu(\R1_+)$ such that
    \begin{equation}\label{eq3.7n}
        \norm{u}_{H^{k + 2}_{\mu} (\R1_+)} \leq c \norm{f + u_{,zz}}_{H^k_{\mu}(\R1_+)}.
    \end{equation}
\end{lemma}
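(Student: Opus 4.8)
The plan is to work entirely on the Fourier side in the transformed variable $\tau=-\ln r$, exploiting the equivalences \eqref{2.2} and \eqref{2.4} established in Remark \ref{r2.2}. First I would record that, after the substitution and multiplication by $r^2$ carried out in \eqref{eq3.3n}--\eqref{eq3.6n}, solving \eqref{eq3.2n} in $H^{k+2}_\mu(\R1_+)$ is equivalent to finding $u'(\tau)$ with $\sum_{j=0}^{k+2}\int_\R1|\partial_\tau^j u'|^2 e^{2h\tau}\,\ud\tau<\infty$, where $h=k+1-\mu$ (note the shift: the right-hand side $g'=r^2(f+u_{,zz})$ carries two extra powers of $r$, so the exponential weight for $g'$ in the $H^k$ norm is the same $e^{2h\tau}$ as for $u'$ in the $H^{k+2}$ norm — this is exactly the gain-of-two-derivatives bookkeeping). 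By Parseval \eqref{2.4}, this is in turn equivalent to $\hat u(\lambda)$ being in the weighted $L_2$ space on the line $\Im\lambda=h$ with weight $\sum_{j=0}^{k+2}|\lambda|^{2j}$, and similarly for $\hat g'$ with weight $\sum_{j=0}^{k}|\lambda|^{2j}$.

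Next I would take the representation $\hat u=R(\lambda)\hat g'$ from \eqref{eq3.6n}, with $R(\lambda)=\frac{1}{\lambda(\lambda+2i)}$, and verify the multiplier estimate: on the line $\Im\lambda=h=k+1-\mu$, which by hypothesis avoids the poles $\lambda=0$ and $\lambda=-2i$, one has
\[
    \Big(\sum_{j=0}^{k+2}|\lambda|^{2j}\Big)|R(\lambda)|^2 \leq c\,\Big(\sum_{j=0}^{k}|\lambda|^{2j}\Big)
\]
uniformly in $\lambda$ on that line. This is the heart of the argument: as $|\lambda|\to\infty$ we have $|R(\lambda)|\sim|\lambda|^{-2}$, so $|\lambda|^{2(k+2)}|R(\lambda)|^2\sim|\lambda|^{2k}$ and the top-order terms match; for bounded $\lambda$ the quantity $|R(\lambda)|$ is bounded because the line stays a fixed positive distance from the two poles, so the low-order terms are controlled as well. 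Multiplying $\hat g'$ by $R(\lambda)$ and integrating over the line then yields
\[
    \int_{\Im\lambda=h}\sum_{j=0}^{k+2}|\lambda|^{2j}|\hat u|^2\,\ud\lambda \leq c\int_{\Im\lambda=h}\sum_{j=0}^{k}|\lambda|^{2j}|\hat g'|^2\,\ud\lambda,
\]
and translating back through \eqref{2.4} and \eqref{2.2} gives $\norm{u}_{H^{k+2}_\mu(\R1_+)}\le c\norm{f+u_{,zz}}_{H^k_\mu(\R1_+)}$, which is \eqref{eq3.7n}. Existence follows because $R(\lambda)\hat g'$ is a well-defined element of the target weighted space (the estimate just proved shows the inverse Fourier transform lands in $H^{k+2}_\mu$), and uniqueness follows because any solution must have its Fourier transform satisfy $\lambda(\lambda+2i)\hat u=\hat g'$ on the line, which forces $\hat u=R(\lambda)\hat g'$ since $\lambda(\lambda+2i)\ne 0$ there.

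The main obstacle is making the multiplier bound fully rigorous, i.e. the passage from "$R(\lambda)\hat g'$ satisfies the weighted $L_2$ bound" to "the corresponding $u'(\tau)$ genuinely solves \eqref{eq3.4n} and its back-transform $u(r)$ solves \eqref{eq3.2n} in the pointwise/distributional sense on $\R1_+$". One has to be a little careful that the formal manipulations $r\partial_r=-\partial_\tau$ and the multiplication by $r^2$ are reversible on the relevant function class, and that the contour $\Im\lambda=h$ is the correct one dictated by the weight $\mu$ (this is precisely the Kondratiev prescription: the admissible weights are exactly those for which the shifted line misses the spectrum $\{0,-2i\}$ of the model operator). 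Everything else — the Parseval identities, the equivalence of norms — is already supplied by Remark \ref{r2.2}, so the proof is short once the multiplier inequality is in hand.
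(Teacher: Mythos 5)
Your proposal follows essentially the same route as the paper's proof: pass to $\tau=-\ln r$, use $\hat u=R(\lambda)\hat g'$ on the contour $\Im\lambda=1+k-\mu$, bound the multiplier there (which is exactly the second inequality in the paper's chain \eqref{eq3.8n}), apply Parseval, and convert back to the $r$-variable using $g=r^2(f+u_{,zz})$ to land in $H^{k+2}_\mu$ versus $H^k_\mu$. It is correct, and in fact makes explicit the multiplier estimate and the existence/uniqueness step that the paper leaves implicit.
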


\begin{proof}
    Since $R(\lambda)$ does not have poles on the line $\Im \lambda = 1 + k - \mu = h$ we can integrate \eqref{eq3.6n} along the line $\Im \lambda = h$. Then we have
    \begin{multline}\label{eq3.8n}
        \int_{-\infty + ih}^{+\infty + ih} \sum_{j = 0}^{k + 2} \abs{\lambda}^{2(k + 2 - j)} \abs{\hat u}^2\, \ud \lambda \leq \int_{-\infty +ih}^{\infty + ih} \sum_{j = 0}^{k + 2} \abs{\lambda}^{2(k + 2 - j)} \abs{R(\lambda)\hat g'}^2\, \ud \lambda \\
        \leq c \int_{-\infty + ih}^{\infty + ih} \sum_{j = 0}^k \abs{\lambda}^{2(k - j)} \abs{\hat g'}^2\, \ud \lambda.
    \end{multline}

    By the Parseval identity (see \eqref{2.4}) inequality \eqref{eq3.8n} becomes
    \begin{equation*}
        \int_{\R1} \sum_{j = 0}^{k + 2} \abs{\partial_\tau^j u}^2 e^{2h\tau}\, \ud \tau \leq c \int_{\R1} \sum_{j = 0}^k \abs{\partial_\tau^j g'}^2 e^{2h\tau}\, \ud \tau.
    \end{equation*}
    Passing to variable $r$ yields
    \begin{equation*}
        \int_{\R1_+} \sum_{j = 0}^{k + 2} \abs{r^j \partial_r^j u}^2 r^{2(\mu - k - 1)}\frac{1}{r}\, \ud r \\
        \leq c\int_{\R1_+} \sum_{j = 0}^k \abs{r^j \partial_r^j g}^2 r^{2(\mu - k - 1)} \frac{1}{r}\, \ud r.
    \end{equation*}

    Continuing, we get
    \begin{equation*}
        \int_{\R1_+} \sum_{j = 0}^{k + 2} \abs{r^{j - (k + 2)} \partial_r^j u}^2 r^{2\mu}\, r \ud r \leq c \int_{\R1_+} \sum_{j = 0}^k \abs{r^{j - k} \partial_r^j (f + u_{,zz})}^2 r^{2\mu}\, r \ud r,
    \end{equation*}
    where the relation $g = r^2(f + u_{,zz})$ was used.
    
\end{proof}

\begin{remark}\label{rem3.2}
    Consider a solution $u$ to \eqref{eq3.2n}. In light of Lemma \ref{lem3.1} such a solution has certain regularity. Moreover, when we fix $\mu \in \R1$ we expect from $u$ certain behavior near $r = 0$. We are interested in two cases: $k = 0$ and $k = 1$. 

    When $k = 0$ we have $h = 1 - \mu$. Hence
    \begin{equation*}
        h_1 = 1 - \mu_1 < 0 \qquad \text{for $\mu_1 \in (1,2)$}
    \end{equation*}
    and
    \begin{equation*}
        h_2 = 1 - \mu_2 > 0 \qquad \text{for $\mu_2 \in (0,1)$}
    \end{equation*}

    Similarly, for $k = 1$ we have $h = 2 - \mu$ and
    \begin{equation*}
        \bar h_1 = 2 - \bar \mu_1 < 0 \qquad \text{for $\bar \mu_1 \in (2,3)$}
    \end{equation*}
    and
    \begin{equation*}
        \bar h_2 = 2 - \bar \mu_2 > 0 \qquad \text{for $\bar \mu_2 \in (0,2)$}
    \end{equation*}

    Function $R(\lambda)$ has a pole for $ h = \Im \lambda = 0$, thus the relations
    \begin{equation*}
        \begin{aligned}
            &1 - \mu_1 < 0 < 1 - \mu_2 \\
            &2 - \bar \mu_1 < 0 < 2 - \bar \mu_2
        \end{aligned}
    \end{equation*}
    hold. By Lemma \ref{lem3.1} we have four solutions:
    \[
        \begin{aligned}
            &k = 0: & &u_1 \in H^2_{\mu_1}(\R1_+), & &u_2 \in H^2_{\mu_2}(\R1_+) \\
            &k = 1: & &\bar u_1 \in H^3_{\bar \mu_1}(\R1_+), & &\bar u_2 \in H^3_{\bar \mu_2}(\R1_+)
        \end{aligned}
    \]
    Our aim is to investigate the relations between these solutions. 

\end{remark}

We will be using the notation from Remark \ref{rem3.2}.

\begin{lemma}
    Let $k = 0$. Then there exists a constant $c_0$ such that 
    \begin{equation}\label{eq3.16n}
        u_1 - u_2 = c_0.
    \end{equation}
    If $k = 1$, then we also have 
    \begin{equation}\label{eq3.17n}
        \bar u_1 - \bar u_2 = c_0.
    \end{equation}
\end{lemma}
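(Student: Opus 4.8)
The plan is to compare the two solutions by looking at the difference of their Fourier transforms and identifying where this difference can be non-zero. Consider first the case $k=0$. By construction (see \eqref{eq3.6n}), on the line $\Im\lambda = h_1 = 1-\mu_1$ the solution $u_1$ satisfies $\hat u_1 = R(\lambda)\hat g'$, and on the line $\Im\lambda = h_2 = 1-\mu_2$ the solution $u_2$ satisfies $\hat u_2 = R(\lambda)\hat g'$. These two horizontal lines straddle the real axis, since $h_1 < 0 < h_2$, and the only pole of $R(\lambda)$ in the strip $h_1 \le \Im\lambda \le h_2$ is the simple pole at $\lambda = 0$ (the other pole $\lambda=-2i$ lies below, as $-2 < h_1$ because $\mu_1 < 3$, which holds since $\mu_1\in(1,2)$). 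Hence, when we shift the contour of the inverse Fourier transform from the line $\Im\lambda=h_2$ down to $\Im\lambda=h_1$, the difference $u_1 - u_2$ picks up precisely the residue of $e^{i\lambda\tau}R(\lambda)\hat g'(\lambda)$ at $\lambda=0$.

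Next I would compute that residue. Near $\lambda = 0$ we have $R(\lambda) = \dfrac{1}{\lambda(\lambda+2i)} = \dfrac{1}{2i\lambda} + O(1)$, so the residue at $\lambda=0$ of $e^{i\lambda\tau}R(\lambda)\hat g'(\lambda)$ is $\dfrac{1}{2i}\hat g'(0)$, which is a constant independent of $\tau$ (equivalently, independent of $r$). Including the normalization factor $1/\sqrt{2\pi}$ from \eqref{2.3} and the factor $2\pi i$ from the residue theorem, we get $u_1 - u_2 = \text{const}$, and this constant is $c_0 = \sqrt{2\pi}\,i\,\cdot\frac{1}{2i}\hat g'(0) = \frac{\sqrt{2\pi}}{2}\hat g'(0)$ — but the precise value is immaterial; what matters is that it is a constant, which is what \eqref{eq3.16n} asserts. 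One should check that the contour shift is legitimate: since $\hat g' = \widehat{g'}$ decays rapidly in $\Re\lambda$ on each horizontal line (because $g' \in$ a suitable weighted space, hence lies, after the exponential change of variables, in an $L_2$-based space whose Fourier transform is controlled along these lines), the integrals over the two vertical segments at $\Re\lambda = \pm N$ connecting the lines tend to zero as $N\to\infty$; this justifies closing the contour and applying the residue theorem.

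For the case $k=1$ the argument is identical in structure: $\bar u_1$ corresponds to the line $\Im\lambda = \bar h_1 = 2-\bar\mu_1 < 0$ and $\bar u_2$ to the line $\Im\lambda = \bar h_2 = 2-\bar\mu_2 > 0$, these again straddle the real axis, and the only pole of $R(\lambda)$ between them is the simple pole at $\lambda=0$ (the pole at $\lambda=-2i$ is excluded since $\bar h_1 = 2-\bar\mu_1 > -1 > -2$ as $\bar\mu_1 \in (2,3)$). The residue computation is the same and yields the same constant $c_0$, giving \eqref{eq3.17n}. Note that the constant is the same in both cases because it depends only on $\hat g'(0)$, i.e. on $\int_{\R1} g'(\tau,z)\,\ud\tau$, which is determined by $f+u_{,zz}$ and not by the choice of weight.

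The main obstacle I anticipate is the rigorous justification of the contour shift and the residue extraction, i.e. verifying that the horizontal pieces of the inverse transform on the two lines differ exactly by $2\pi i$ times the residue at $0$ and nothing else — this requires knowing that $\hat g'(\lambda)$ is holomorphic and rapidly decaying in the closed strip between the two lines. For general $g' \in L_2$ with exponential weights this holomorphy is not automatic, so one likely argues first for $g'$ in a dense class (e.g. Schwartz functions, or functions with $\hat g'$ compactly supported) where everything is manifestly legitimate, extracts the identity $u_1 - u_2 = \text{const}$ there, and then passes to the limit using the continuity of the solution maps provided by \eqref{eq3.7n} in Lemma \ref{lem3.1}. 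A secondary, more bookkeeping-type point is to confirm that the pole at $\lambda = -2i$ genuinely lies outside the relevant strips for \emph{both} values of $k$ and for the full admissible ranges of $\mu_1,\mu_2,\bar\mu_1,\bar\mu_2$ given in Remark \ref{rem3.2}, so that the residue at $0$ is the only contribution.
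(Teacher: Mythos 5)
Your proposal is correct and follows essentially the same route as the paper: both compare the inverse Fourier integrals along the lines $\Im\lambda=h_1$ and $\Im\lambda=h_2$ (resp. $\bar h_1,\bar h_2$), shift the contour across the strip where $\hat g'$ is analytic, note that the vertical segments vanish as $N\to\infty$, and identify $u_1-u_2$ with the residue of $e^{i\lambda\tau}R(\lambda)\hat g'(\lambda)$ at the simple pole $\lambda=0$, which is independent of $\tau$ and hence a constant (the pole at $-2i$ lying outside both strips). Your explicit evaluation of the residue and the density/limiting remark are harmless additions to the paper's argument.
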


\begin{proof}
    \begin{figure}
		\centering
		\begin{tikzpicture}
            \begin{scope}[decoration={markings, mark=between positions 0.1 and 1.9 step 2cm with {\arrow{<}}}]
    	    	\draw [thick, ->] (0,-2) -- (0,2) node[right, black] {$\Im \lambda$};
	    	    \draw [thick, ->] (-3,0) -- (3,0) node[below, black] {$\Re \lambda$};
                \draw[black, postaction={decorate}] (-2,-1) rectangle (2,1);
            \end{scope}
            \draw[thin, gray, dashed] (-3,1) -- (3,1) node[pos=0.5, above right] {$h_2$};
            \filldraw (0,1) circle (1.2pt);
            \draw[thin, gray, dashed] (-3,-1) -- (3,-1) node[pos=0.5, below right] {$h_1$};
            \filldraw (0,-1) circle (1.2pt);
            \filldraw (2,1) circle (1.5pt) node[above right] {$N+ih_2$};
            \filldraw (2,-1) circle (1.5pt) node[below right] {$N+ih_1$};
            \filldraw (-2,1) circle (1.5pt) node[above left] {$-N+ih_2$};
            \filldraw (-2,-1) circle (1.5pt) node[below left] {$-N+ih_1$};
            \filldraw (0,0) circle (1.2pt) node[above right] {$0$};
            \filldraw (0,-1.7) circle (1.2pt) node[right] {$-2i$};

		\end{tikzpicture}
        \caption{Line integral domain}\label{domain}
	\end{figure}
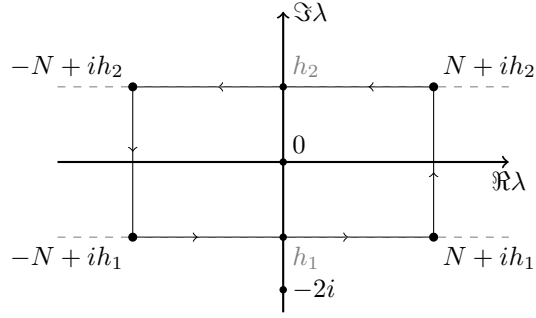

    Consider the case $k = 0$. Function $\hat g'$ is analytic for any $h \in (h_1, h_2)$ and
    \[
        \int_{-\infty + ih}^{+\infty + ih} \abs{\hat g'}^2\, \ud \lambda < \infty
    \]
    for any $h \in [h_1,h_2]$. We also have (see Fig. \ref{domain}) 
    \begin{multline*}
        u_1 = \lim_{N\to\infty} \int _{-N+ih_1}^{N + ih_1} e^{i\lambda \tau} \hat u(\lambda)\, \ud \lambda = \lim_{N\to \infty} \int_{-N+ih_1}^{N+ih_1} e^{i\lambda \tau} R(\lambda) \hat g'(\lambda)\, \ud \lambda \\
        = \operatorname{Res} e^{i\lambda \tau} R(\lambda) \hat g'(\lambda) - \lim_{N\to \infty} \left(\int_{N+ih_1}^{N+ih_2}e^{i\lambda \tau} R(\lambda) \hat g'(\lambda)\, \ud \lambda \right.\\
    \left. + \int_{N+ih_1}^{-N+ih_2}e^{i\lambda \tau} R(\lambda) \hat g'(\lambda)\, \ud \lambda + \int_{-N+ih_2}^{N+ih_2}e^{i\lambda \tau} R(\lambda) \hat g'(\lambda)\, \ud \lambda\right).
    \end{multline*}
    Passing with $N\to \infty$ yields
    \[
        u_1 = u_2 + \operatorname{Res} e^{i\lambda \tau} R(\lambda) \hat g'(\lambda) = u_2 + c_0,
    \]
    where
    \[
        u_j = \int_{-\infty + ih_j}^{\infty + ih_j} e^{i\lambda \tau} R(\lambda) \hat g'(\lambda)\, \ud \lambda.
    \]
    Hence \eqref{eq3.16n} holds.

    For $k=1$ operator $R(\lambda)$ has the same pole in the interval $(\bar h_1, \bar h_2)$. Hence \eqref{eq3.17n} holds. This ends the proof.
\end{proof}

\begin{remark}
    Let us compute $c_0$. Recall that $u_1 \in H^2_{\mu_1}(\R1_+)$ with $\mu_1 \in (1,2)$. It means that $u_1\big\vert_{r = 0} \neq 0$. But $u_2 = u_1 - c_0 \in H^2_{\mu_2}(\R1_+)$, $\mu_2 \in (0,1)$ so $u_2\big\vert_{r=0} = 0$. Hence
    \[
        c_0 = u_1(0) = u_1\big\vert_{r = 0}.
    \]

    Similarly, $\bar u_2 = \bar u_1 - c_0 \in H^3_{\bar \mu_2}(\R1_+)$ with $\bar \mu_2 \in (0,2)$ so
    \[
        c_0 = \bar u_1(0) \equiv \bar u_1\big\vert_{r = 0}.
    \]
    Investigating $\bar u_2 \in H^3_{\bar \mu_2}(\R1_+)$ with $\bar \mu_2 \in (0,1)$ we also need that
    \[
        \partial_r \bar u_2 = \partial_r \bar u_1 = 0 \qquad \text{for $r = 0$}.
    \]
    The restriction follows from Remark \ref{r2.4}.

\end{remark}

Functions $u_1$ and $\bar u_1$ are valid candidates for weak solutions to \eqref{eq3.2n} because they do not vanish on $r = 0$. 

Repeating the proof of Lemma \ref{lem2.5} we can show existence of weak solutions to \eqref{1.14} and the estimates
\begin{equation}\label{eq3.18n}
    \norm{u}_{H^1(\Omega^{(1)})}^2 + \int_{\Omega^{(1)}} \frac{u^2}{r^2}\, \ud x \leq c \norm{f}^2_{L_2(\Omega^{(1)})}
\end{equation}
and
\begin{equation}\label{eq3.19n}
    \norm{u_{,rz}}_{L_2(\Omega^{(1)})}^2 + \norm{u_{,zz}}^2_{L_2(\Omega^{(1)})} + \int_{\Omega^{(1)}} \frac{u_{,z}^2}{r^2}\, \ud x \leq c \norm{f}^2_{L_2(\Omega^{(1)})}
\end{equation}
Applying \eqref{eq3.7n} for $u = u_1$ and $\mu = \mu_1$ and using \eqref{eq3.19n} yields
\begin{equation*}\label{eq3.20n}
    \norm{u_1}^2_{L_2(-a,a;H^2_{\mu_1}(\R1_+))} \leq c \norm{f}^2_{L_2(-a,a;L_{2,\mu_1}(\R1_+))},
\end{equation*}
where $\mu_1 \in (1,2)$. The above inequality reflects increasing regularity of weak solutions to \eqref{1.14}.

Our aim is to find estimates in weighted Sobolev spaces for weak solutions to problem \eqref{1.14}. Let $u$ be such a weak solutions. We already know that $u$ satisfies \eqref{eq3.18n} and \eqref{eq3.19n}. Recalling properties of $u_1$ and $u_2$ and assuming that $f \in L_2(-a,a;L_{2,\mu}(\R1_+))$, $\mu \in (0,1)$ we can conclude that $u$ satisfies
\begin{equation*}\label{eq3.21n}
    \norm{u - u(0)}_{L_2(-a,a;H^2_{\mu}(\R1_+))} \leq c\norm{f}_{L_2(-a,a;L_{2,\mu}(\R1_+))}
\end{equation*}
where $u(0) = u\big\vert_{r = 0}$.

Recalling properties of $\bar u_1$ and $\bar u_2$ and assuming that
\[
    f + u_{,zz} \in L_2(-a,a;H^1_\mu(\R1_+))
\]
we conclude that 
\begin{equation}\label{eq3.22n}
    \norm{u - u(0)}_{L_2(-a,a;H^3_{\mu}(\R1_+))} \leq c \norm{f + u_{,zz}}_{L_2(-a,a;H^1_{\mu}(\R1_+))},
\end{equation}
where $\mu \in (0,1)$.

From \eqref{eq3.7n} for $k = 0$ and $\mu = 0$ we obtain for weak solutions to \eqref{1.14} the inequality
\begin{equation*}\label{eq3.23n}
    \norm{u - u(0)}_{L_2(-a,a;H^2_0(\R1_+))} \leq c \norm{f + u_{,zz}}_{L_2((-a,a)\times\R1_+)},
\end{equation*}

To derive estimate \eqref{eq3.22n} for $\mu = 0$ we see that $u - u(0)$ must be modified as it does not vanish quickly enough at $r = 0$. Thus, we introduce a new function $\eta(r,z)$ such that that $\left(u - u(0) - \eta(r,z)\right)_{,rr}\big\vert_{r=0} = 0$. Moreover, we would also need:

\begin{lemma}[cf. Lemma 4.12 in \cite{Ko67}]\label{lem3.5}
Let $\bar u\in H^k(\R1_+)$, $k\in\N$,\break
 ${\partial^i\over\partial r^i}u|_{r=0}=0$ for $i<k-1$ and $\partial_r^{k-1}\bar u\in H_0^1(\R1_+)$. Then $\bar u\in H_0^k(\R1_+)$ and
\begin{equation}
\|\bar u\|_{H_0^k(\R1_+)}\le c\|\partial_r^{k-1}\bar u\|_{H_0^1(\R1_+)}.
\label{3.55}
\end{equation}
\end{lemma}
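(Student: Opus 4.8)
\textbf{Proof plan for Lemma \ref{lem3.5}.}

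The natural strategy is to reduce the $H_0^k$-estimate to the already-understood $k=2$ situation via the logarithmic substitution $\tau = -\ln r$ and the Fourier–Parseval machinery of Remark \ref{r2.2}, or alternatively to argue by induction on $k$ using Hardy's inequality (Remark \ref{rem2.8u}) directly. I would favor the latter, more hands-on route. The point is that membership of $\bar u$ in $H_0^k(\R1_+)$ means precisely that
\[
\sum_{i=0}^k \int_{\R1_+} |\partial_r^i \bar u|^2 r^{2(i-k)}\, r\,\ud r < \infty,
\]
so the only terms that are not already controlled by the hypothesis $\bar u \in H^k(\R1_+)$ (which gives the $i=k$ term with weight $r^0$, and all lower derivatives in $L_2$) are the genuinely singular ones, i.e. $\int |\partial_r^i \bar u|^2 r^{2(i-k)}\, r\,\ud r$ for $i < k$, where the weight exponent $2(i-k)+1 < 1$ is negative once $i < k-1$ (and equals $-1$ when $i=k-1$). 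These are exactly the expressions Hardy's inequality is designed to bound.

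The key steps, carried out from the top derivative downward: first, $\partial_r^{k-1}\bar u \in H_0^1(\R1_+)$ is the hypothesis, which by definition gives control of $\int |\partial_r^{k-1}\bar u|^2 r^{-2}\, r\,\ud r$ and $\int |\partial_r^k \bar u|^2\, r\,\ud r$ by $\|\partial_r^{k-1}\bar u\|_{H_0^1(\R1_+)}^2$. Second, I apply Hardy's inequality in the form of Remark \ref{rem2.8u} iteratively: setting $f = \partial_r^{i}\bar u$ so that $f' = \partial_r^{i+1}\bar u$, and choosing $\alpha = 2(i - k) + 2$ (which satisfies $\alpha < 1$ precisely because $i \le k-1$, with the borderline $i=k-1$ giving $\alpha = 0 < 1$), one gets
\[
\int_{\R1_+} |\partial_r^{i}\bar u|^2\, r^{2(i-k)}\, r\,\ud r \;=\; \int_{\R1_+} x^{\alpha-2}|f|^2\,\ud x \;\le\; \frac{4}{(1-\alpha)^2}\int_{\R1_+} x^{\alpha}|f'|^2\,\ud x \;=\; \frac{4}{(1-\alpha)^2}\int_{\R1_+} |\partial_r^{i+1}\bar u|^2\, r^{2(i+1-k)}\, r\,\ud r.
\]
Applying this for $i = k-1, k-2, \dots, 0$ cascades the estimate upward and bounds every singular term $\int |\partial_r^i \bar u|^2 r^{2(i-k)}\, r\,\ud r$ by a constant times $\int |\partial_r^k \bar u|^2\, r\,\ud r + \int |\partial_r^{k-1}\bar u|^2 r^{-2}\, r\,\ud r \le c\|\partial_r^{k-1}\bar u\|_{H_0^1(\R1_+)}^2$. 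Summing over $i$ gives both $\bar u \in H_0^k(\R1_+)$ and the estimate \eqref{3.55}.

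The one point that needs genuine care — and which I expect to be the main obstacle — is the justification of each application of Hardy's inequality: Remark \ref{rem2.8u} requires $f(x) = \int_0^x g(y)\,\ud y$, i.e. that $f$ be recovered from its derivative with \emph{no boundary term at $r=0$}. This is exactly where the vanishing hypotheses $\partial^i \bar u/\partial r^i|_{r=0} = 0$ for $i < k-1$ enter: they guarantee $\partial_r^i \bar u(r) = \int_0^r \partial_r^{i+1}\bar u(y)\,\ud y$ for each $i$ in the relevant range, so that the iteration is legitimate. For the top step $i = k-1$ the relevant trace $\partial_r^{k-1}\bar u|_{r=0}$ need not vanish, but there $\alpha = 0$ and the weight $r^{-2}$ term is supplied directly by the hypothesis $\partial_r^{k-1}\bar u \in H_0^1(\R1_+)$ rather than by Hardy, so no boundary term is needed. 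One should also note that a priori these manipulations are performed on smooth $\bar u$ (the density of smooth functions was the standing convention of Remark \ref{r2.2}) and the general case follows by approximation, using that the hypotheses are closed under the relevant limits.
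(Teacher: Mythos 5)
Your proposal is correct and takes essentially the same route as the paper: iterated application of Hardy's inequality from Remark \ref{rem2.8u}, with the vanishing traces $\partial_r^i\bar u|_{r=0}=0$, $i<k-1$, justifying each step, and the $i=k-1$ and $i=k$ terms of the $H_0^k$-norm supplied directly by the hypothesis $\partial_r^{k-1}\bar u\in H_0^1(\R1_+)$. The only slip is bookkeeping: your displayed identity actually corresponds to $\alpha=2(i-k)+3$, so $\alpha<1$ holds exactly for $i<k-1$ while $i=k-1$ gives the critical $\alpha=1$ where Hardy fails --- harmless, since (as you note) that term comes from the hypothesis rather than from Hardy.
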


\begin{proof}
Using the inequality
From Remark \ref{rem2.8u} we infer that
\[
    \int_0^\infty r^{-2}|\partial_r^{k-1}\bar u(r)|^2\,r\ud r\geq c\int_0^\infty r^{-4}|\partial_r^{k-2}\bar u(r)|^2\,r\ud r \geq c\int_0^\infty r^{-2k}|\bar u|^2\,r\ud r,
\]
which holds for $\partial_r^i\bar u|_{r=0}=0$, $i<k-1$. This implies (\ref{3.55}) and concludes the proof.
\end{proof}

Recall that $u$ is a solution to 
\begin{equation}\label{eq3.26u}
    u_{,rr} = - \left(\frac{3}{r} u_{,r} + u_{,zz} + f\right) \equiv g(r,z)
\end{equation}

\begin{lemma}\label{l3.6}
    Let $u$ solve \eqref{eq3.26u} and let $u\big\vert_{r = 0} = u(0)$. Assume that $u \in L_2(-a,a;H^3(\R1_+))$ and $f \in L_2(-a,a;H^1(\R1_+))$. Then there exists a function 
    \begin{equation}\label{eq3.27u}
        \eta(r, z) = \int_0^r (r - \tau) g(\tau, z)(1 + K(\tau))\, \ud \tau,
    \end{equation}
    where $K(r)$ is a smooth function with a compact support near $r = 0$ such that
    \[
        \lim_{r\to 0} K(r) r^{-2} = c_0 < \infty
    \]
    and the function
    \begin{equation}\label{eq3.28u}
        u - \eta - u(0) \in L_2(-a,a;H^3_0(\R1_+))
    \end{equation}
    satisfies the inequality
    \begin{multline}\label{eq3.29u}
        \norm{u - \eta - u(0)}_{L_2(-a,a;H^3_0(\R1_+))} \\
        \leq c\left( \norm{u}_{L_2(-a,a;H^2(\R1_+))} + \norm{f + u_{,zz}}_{L_2(-a,a;H^1(\R1_+))}\right).
    \end{multline}
\end{lemma}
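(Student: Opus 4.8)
The plan is to verify that the explicit function $\eta$ defined in \eqref{eq3.27u} removes enough of the singular behavior of $u - u(0)$ at $r = 0$ so that Lemma \ref{lem3.5} (with $k = 3$) becomes applicable. First I would observe that $\eta$ is, by construction, a particular solution to the ODE $\eta_{,rr} = g(r,z)(1 + K(r))$ with $\eta\big\vert_{r=0} = \eta_{,r}\big\vert_{r=0} = 0$; this is the standard Duhamel/variation-of-parameters representation, so these identities are immediate from differentiating under the integral sign. Set $\bar u := u - \eta - u(0)$. Then $\bar u\big\vert_{r=0} = 0$ (by definition of $u(0)$ and $\eta(0)=0$), $\bar u_{,r}\big\vert_{r=0} = u_{,r}\big\vert_{r=0} - \eta_{,r}\big\vert_{r=0} = 0$ by Remark \ref{r2.4} (recall $\psi_1(0)\neq 0$ but $\partial_r\psi_1\big\vert_{r=0}=0$, which transfers to $u$), and — this is the point of the weight $1+K$ — $\bar u_{,rr}\big\vert_{r=0} = g(0,z) - g(0,z)(1 + K(0)) = -g(0,z)K(0)$. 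Since $K$ has compact support near $r=0$ with $K(r)/r^2 \to c_0$, in particular $K(0) = 0$, so $\bar u_{,rr}\big\vert_{r=0} = 0$ as well. Hence $\partial_r^i \bar u\big\vert_{r=0} = 0$ for $i < 2 = k-1$.

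Next I would check the remaining hypothesis of Lemma \ref{lem3.5}, namely $\partial_r^{2}\bar u \in H^1_0(\R1_+)$ for a.e.\ $z$. We have $\partial_r^2\bar u = u_{,rr} - \eta_{,rr} = g - g(1+K) = -gK$. Since $K$ is smooth with compact support away from a neighborhood of $r=0$ where it behaves like $c_0 r^2$, the product $gK$ inherits from $g = -(3u_{,r}/r + u_{,zz} + f)$ the regularity needed: the potentially dangerous term $\frac{3}{r}u_{,r}\cdot K$ is controlled because $K/r^2$ is bounded and $u_{,r}/r \in L_2$-type quantities are available from $u \in H^2(\R1_+) \subset H^2_0$ (via the equivalence in Remark \ref{r2.2}) together with the hypothesis $u \in L_2(-a,a;H^3(\R1_+))$; and $u_{,zz} + f \in L_2(-a,a;H^1(\R1_+))$ by assumption. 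This gives $gK \in L_2(-a,a;H^1_0(\R1_+))$ with
\[
    \norm{gK}_{L_2(-a,a;H^1_0(\R1_+))} \leq c\left(\norm{u}_{L_2(-a,a;H^3(\R1_+))} + \norm{f + u_{,zz}}_{L_2(-a,a;H^1(\R1_+))}\right),
\]
where the $H^3$ norm of $u$ collapses to an $H^2$-type norm on the support of $K$ since $K$ vanishes near the only singular point; I would be slightly careful here and state it as the $H^2(\R1_+)$ norm as written in \eqref{eq3.29u}, using that $K$ and all its derivatives are bounded and supported in $r \in [\delta, R]$ for $\delta$ possibly zero but with the $r^2$-vanishing saving one derivative's worth of weight.

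Then Lemma \ref{lem3.5} applied pointwise in $z$ with $k=3$ gives $\bar u \in H^3_0(\R1_+)$ and $\norm{\bar u}_{H^3_0(\R1_+)} \leq c\norm{\partial_r^2\bar u}_{H^1_0(\R1_+)} = c\norm{gK}_{H^1_0(\R1_+)}$; integrating the square in $z$ over $(-a,a)$ and combining with the bound of the previous paragraph yields both the membership \eqref{eq3.28u} and the estimate \eqref{eq3.29u}. The main obstacle I anticipate is the bookkeeping in the previous paragraph: showing that $gK \in H^1_0(\R1_+)$ genuinely requires that the weight $1+K$ was chosen so that $K(0)=0$ \emph{and} $K'(0)=0$ (the latter forced by $K(r)\sim c_0 r^2$), because $\partial_r(gK)$ contains $g_{,r}K + gK'$ and the term $g_{,r}K$ has a factor like $\partial_r(u_{,r}/r)\cdot K \sim (u_{,rr}/r - u_{,r}/r^2)K$, whose worst piece is $u_{,r}K/r^2$ — finite precisely because $K/r^2 \to c_0$. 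Making this quantitative, i.e.\ tracking which norm of $u$ (the $H^2$ versus $H^3$ on $\R1_+$) actually appears and confirming it matches the right-hand side of \eqref{eq3.29u}, is where the real work lies; everything else is the Duhamel computation and an invocation of Lemma \ref{lem3.5}.
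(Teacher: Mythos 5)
Your proposal is correct and follows essentially the same route as the paper: construct $\eta$ from $\eta_{,rr}=g(1+K)$, verify the vanishing of $u-\eta-u(0)$ and its first derivative at $r=0$ via Remark \ref{r2.4}, reduce the problem to bounding $\norm{\partial_r^2(u-\eta-u(0))}_{H^1_0(\R1_+)}=\norm{gK}_{H^1_0(\R1_+)}$ by $c\left(\norm{u}_{H^2(\R1_+)}+\norm{f+u_{,zz}}_{H^1(\R1_+)}\right)$ using $K(r)\sim c_0 r^2$, and then apply Lemma \ref{lem3.5} with $k=3$ and integrate in $z$. The quantitative bookkeeping you flag (the terms $u_{,r}K/r^2$, $u_{,rr}K/r$, $u_{,r}K_{,r}/r$) indeed closes with only the $H^2$ norm of $u$, exactly as in the paper's estimate \eqref{eq3.30u}.
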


\begin{proof}
    Since $u \in L_2(-a,a; H^3(\R1_+))$ we can work with $\mathcal{C}(-a,a;\mathcal{C}^{\infty}_0(\R1_+))$ and then use the density argument.

    We construct function $\eta$ as a solution to the equation
    \[
        \eta_{,rr} = g(r,z) (1 + K(r)).
    \]
    Integrating this equation we obtain \eqref{eq3.27u}. 

    To prove \eqref{eq3.28u} and \eqref{eq3.29u} we use Lemma \ref{lem3.5} for $k = 3$. To ensure its assumptions are met we check that
    \[
        \begin{aligned}
            \left(u - \eta - u(0)\right)\big\vert_{r = 0} &= -\eta\big\vert_{r = 0} = 0,\\
            \partial_r \left(u - \eta - u(0)\right)\big\vert_{r = 0} &= \partial_r(u - \eta)\big\vert_{r = 0} = \partial_r u \big\vert_{r = 0} - \partial_r \eta\big\vert_{r = 0} = 0,
        \end{aligned}
    \]
    where Remark \ref{r2.4} implies that $u_{,r}\big\vert_{r = 0}$ and 
    \[
        \partial_r \eta = \int_0^r g(\tau, z)(1 + K(\tau))\, \ud \tau
    \]
    gives $\partial_r \eta\big\vert_{r = 0} = 0$.

    Finally, we examine
    \begin{multline}\label{eq3.30u}
        \norm{\partial_{rr}(u - \eta - u(0))}_{H^1_0(\R1_+)} = \norm{\partial_{rr} uK}_{H^1_0(\R1_+)} \\
        = \norm{\left(\frac{3}{r}u_{,r} + u_{,zz} + f\right) K(r)}_{H^1_0(\R1_+)} \leq c \norm{u}_{H^2(\R1_+)} + \norm{f + u_{,zz}}_{H^1(\R1_+))}
    \end{multline}

    Applying Lemma \ref{lem3.5} and integrating \eqref{eq3.30u} with respect to $z$ we derive \eqref{eq3.28u} and \eqref{eq3.29u}. This ends the proof.
\end{proof}

\begin{lemma}\label{lem3.7}
    Let $u$ satisfy \eqref{eq3.26u}, $u\big\vert_{r = 0} = u(0)$, $u \in L_2(-a, a; H^2(\R1_+))$ and $f \in L_2(-a,a; L_2(\R1_+))$. Then, there exists a function
    \begin{equation}\label{eq3.31u}
        \chi(r,z) = \int_0^r u_{,\tau} (1 + K(\tau))\, \ud \tau,
    \end{equation}
    where $K$ is defined in Lemma \ref{l3.6} and the function
    \begin{equation}\label{eq3.32u}
        u - \chi - u(0) \in L_2(-a,a; H^2_0(\R1_+))
    \end{equation}
    satisfies
    \begin{equation}\label{eq3.33u}
        \norm{u - \chi - u(0)}_{L_2(-a,a;H^2_0(\R1_+))} \leq c\norm{u}_{L_2(-a,a;H^2(\R1_+))}.
    \end{equation}
\end{lemma}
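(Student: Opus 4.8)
The plan is to mimic the structure of Lemma \ref{l3.6}, but one derivative lower, so the proof is actually simpler. The function $\chi$ is designed so that $u - \chi - u(0)$ vanishes to sufficiently high order at $r = 0$ to apply Lemma \ref{lem3.5} with $k = 2$. First I would note that since $u \in L_2(-a,a;H^2(\R1_+))$, it suffices by density to argue for $u \in \mathcal{C}(-a,a;\mathcal{C}^\infty_0(\R1_+))$ and then pass to the limit. The function $\chi$ in \eqref{eq3.31u} is precisely the solution of $\chi_{,r} = u_{,r}(1 + K(r))$ with $\chi\big\vert_{r=0} = 0$, so $\partial_r(u - \chi - u(0)) = u_{,r} - u_{,r}(1 + K) = -u_{,r} K(r)$.

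Next I would verify the hypotheses of Lemma \ref{lem3.5} for $k = 2$, i.e. $\bar u := u - \chi - u(0)$ satisfies $\bar u\big\vert_{r=0} = 0$ and $\partial_r \bar u \in H^1_0(\R1_+)$. The first is immediate: $\bar u\big\vert_{r=0} = u(0) - \chi\big\vert_{r=0} - u(0) = 0$ since $\chi\big\vert_{r=0} = 0$. For the second, I would compute
\[
    \norm{\partial_r \bar u}_{H^1_0(\R1_+)} = \norm{u_{,r} K}_{H^1_0(\R1_+)}.
\]
Here the point is that $K$ has compact support near $r = 0$ with $K(r)/r^2 \to c_0$, so $K(r) \sim c_0 r^2$; multiplying $u_{,r}$ (and its first $r$-derivative $u_{,rr}$, times $K$, plus $u_{,r}$ times $K'$) by this factor gains enough powers of $r$ to make the weighted $H^1_0$-norm — which carries weights $r^{2(0-1+i)} = r^{-2}, r^0$ for $i = 0, 1$ — finite and controlled by $\norm{u}_{H^2(\R1_+)}$. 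Concretely, the worst term is $\abs{u_{,r} K}^2 r^{-2}\, r\ud r \sim \abs{u_{,r}}^2 r^2 \cdot r \ud r$, which is dominated by the unweighted $L_2$-norm of $u_{,r}$ (recall $r < R$ is bounded), and the derivative terms $\abs{(u_{,r}K)_{,r}}^2 \, r\ud r$ expand to $\abs{u_{,rr}K + u_{,r}K'}^2\, r\ud r$, again bounded by $\norm{u}^2_{H^2(\R1_+)}$ since $K, K'$ are bounded with compact support. This gives $\norm{\partial_r\bar u}_{H^1_0(\R1_+)} \leq c\norm{u}_{H^2(\R1_+)}$.

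Then Lemma \ref{lem3.5} with $k = 2$ yields $\bar u \in H^2_0(\R1_+)$ together with $\norm{\bar u}_{H^2_0(\R1_+)} \leq c\norm{\partial_r\bar u}_{H^1_0(\R1_+)} \leq c\norm{u}_{H^2(\R1_+)}$ pointwise in $z$. Squaring and integrating over $z \in (-a,a)$ gives \eqref{eq3.32u} and \eqref{eq3.33u}. I do not expect any serious obstacle here; the one point requiring a little care is confirming that multiplication by $K$ genuinely produces the two extra powers of $r$ needed to absorb the singular weight $r^{-2}$ appearing in the $i=0$ term of the $H^1_0$-norm — this is exactly where the hypothesis $\lim_{r\to0}K(r)r^{-2} = c_0 < \infty$ enters, and it is what distinguishes $\chi$ from simply subtracting a Taylor polynomial. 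Unlike Lemma \ref{l3.6}, no boundary condition on $\partial_r\bar u$ at $r=0$ (beyond the integrability built into $H^1_0$) is needed, so Remark \ref{r2.4} is not invoked here.
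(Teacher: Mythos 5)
Your proof is correct and takes essentially the same route as the paper: reduce by density, define $\chi$ through $\chi_{,r}=u_{,r}(1+K)$ so that $\partial_r(u-\chi-u(0))=-u_{,r}K$, check the vanishing at $r=0$, bound $\norm{u_{,r}K}_{H^1_0(\R1_+)}\le c\norm{u}_{H^2(\R1_+)}$ using $K(r)=O(r^2)$ and the compact support of $K$, and then apply Lemma \ref{lem3.5} with $k=2$ and integrate in $z$. Your explicit accounting of the weights $r^{-2}$ and $r^{0}$ in the $H^1_0$-norm is in fact stated more carefully than the paper's display \eqref{eq3.35u}, but the underlying argument is identical.
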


\begin{proof}
    Since $u \in L_2(-a,a; H^2(\R1_+))$ we prove this lemma for functions from $\mathcal{C}(-a,a; \mathcal{C}^\infty_0(\R1_+))$ and use the density argument. 

    We construct function $\chi$ as a solution to
    \begin{equation}\label{eq3.34u}
        \chi_{,r} = u_{,r} (1 + K(r)).
    \end{equation}
    Integrating \eqref{eq3.34u} with respect to $r$ yields \eqref{eq3.31u}.

    To prove \eqref{eq3.32u} and \eqref{eq3.33u} we use Lemma \ref{lem3.5} for $k = 2$. We need to check its assumptions. We have
    \[
        (u - \chi - u(0))\big\vert_{r = 0} = (u - u(0))\big\vert_{r = 0} - \chi\big\vert_{r = 0} = 0
    \]
    and
    \begin{multline}\label{eq3.35u}
        \norm{(u - \chi - u(0))_{,rr}}_{H^1_0(\R1_+)} = \norm{\partial_r \int_0^r u_{,\tau}(\tau, z) K(\tau)\, \ud \tau}_{H^1_0(\R1_+)} \\
        = \norm{u_{,r}K + u K_{,r}}_{H^1_0(\R1_+)} + \norm{u K_{,r}}_{H^1_0(\R1_+)} \leq c\norm{u}_{H^2(\R1_+)}.
    \end{multline}
    Integrating \eqref{eq3.35u} with respect to $z$ and applying Lemma \ref{lem3.5} for $ k = 2$ we conclude the proof.
\end{proof}

Recall that $\psi_1$ is a solution to
\begin{equation}\label{eq3.36u}
    \left\{
    \begin{aligned}
        &-\psi_{1,rr} - \psi_{1,zz} - \frac{3}{r} \psi_{1,r} = \omega_1 & &\text{in $\Omega$},\\
        &\psi_1 = 0 & &\text{on $S_1 \cup S_2$}.
    \end{aligned}\right.
\end{equation}

\begin{lemma}\label{lem3.8}
    For solutions to \eqref{eq3.36u} the following estimates
    \begin{equation}\label{eq3.37u}
        \int_{\Omega} \left(\psi_{1,rr}^2 + \psi_{1,rz}^2 + \psi_{1,zz}^2\right)\, \ud x + \int_{\Omega} \frac{1}{r^2} \psi_{1,r}^2\, \ud x \leq c \norm{\omega_1}_{L_2(\Omega)}^2
    \end{equation}
    and
    \begin{equation}\label{eq3.38u}
        \int_{\Omega} \left(\psi_{1,zzr}^2 + \psi_{1,zzz}^2\right)\, \ud x \leq c \norm{\omega_{1,z}}_{L_2(\Omega)}^2
    \end{equation}
    hold.
\end{lemma}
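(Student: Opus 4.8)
The plan is to obtain \eqref{eq3.37u} and \eqref{eq3.38u} by energy-type arguments applied directly to \eqref{eq3.36u}, exploiting the boundary conditions and the decay \eqref{2.9}--\eqref{eq1.8a} to kill all boundary terms at $r=0$ and $r=R$.

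\emph{Step 1 (the mixed and $zz$ derivatives).} First I would multiply \eqref{eq3.36u}$_1$ by $-\psi_{1,zz}$ and integrate over $\Omega$ with the measure $\ud x = r\,\ud r\,\ud z$. The term $-\psi_{1,rr}(-\psi_{1,zz})$ integrated by parts in $z$ (no boundary contribution because $\psi_{1,zz}=0$ on $S_2$ by \eqref{eq2.10aa}, or alternatively in $r$) produces $\int_\Omega \psi_{1,rz}^2\,\ud x$ up to a term handled as in Lemma~\ref{lem2.5}; the term $-\psi_{1,zz}(-\psi_{1,zz})$ gives $\int_\Omega \psi_{1,zz}^2\,\ud x$; the singular term $-\frac3r\psi_{1,r}(-\psi_{1,zz})$ is integrated by parts in $z$ to yield $\int_\Omega \frac3r \psi_{1,rz}\psi_{1,z}\,\ud x$, which after a further integration by parts in $r$ (using that $\frac1r(r^{\cdot})$-type boundary terms vanish by \eqref{eq1.7a}) is absorbed. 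This is essentially the computation already carried out for \eqref{2.12}/\eqref{eq3.19n}, so it gives
\[
    \int_\Omega \psi_{1,rz}^2\,\ud x + \int_\Omega \psi_{1,zz}^2\,\ud x \le c\norm{\omega_1}_{L_2(\Omega)}^2.
\]

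\emph{Step 2 (the $rr$ and $\frac1{r^2}\psi_{1,r}^2$ terms).} To close \eqref{eq3.37u} I would go back to the equation written as $-\psi_{1,rr}-\frac3r\psi_{1,r}=\omega_1+\psi_{1,zz}\equiv F$, with $F\in L_2(\Omega)$ by Step~1 and the hypothesis $\omega_1\in L_2(\Omega)$. Multiplying by $-\psi_{1,rr}$ and integrating in $r$ over $(0,R)$ gives $\int \psi_{1,rr}^2\,r\ud r + 3\int \frac1r\psi_{1,r}\psi_{1,rr}\,r\ud r = -\int F\psi_{1,rr}\,r\ud r$; the cross term equals $\frac32\int \partial_r(\psi_{1,r}^2)\,\ud r = \frac32[\psi_{1,r}^2]_0^R$, which is controlled (finite and of the right sign, or zero) thanks to the expansion \eqref{2.9}. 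Alternatively, and more cleanly, I would note that $\frac1r\psi_{1,r}$, $\psi_{1,rr}$ and the full Laplacian-type operator are comparable: from $-\psi_{1,rr}-\frac3r\psi_{1,r}=F$ one squares and integrates, using that $\int_\Omega \frac1r\psi_{1,r}\psi_{1,rr}\,\ud x$ can be written via integration by parts as $-\tfrac12\int_\Omega \frac1{r^2}\psi_{1,r}^2\,\ud x$ plus a vanishing boundary term, to get both $\int_\Omega\psi_{1,rr}^2\,\ud x$ and $\int_\Omega \frac1{r^2}\psi_{1,r}^2\,\ud x$ bounded by $\norm{F}_{L_2(\Omega)}^2\le c\norm{\omega_1}_{L_2(\Omega)}^2$. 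Combined with Step~1 this yields \eqref{eq3.37u}.

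\emph{Step 3 (one more $z$-derivative).} For \eqref{eq3.38u} I would differentiate \eqref{eq3.36u}$_1$ in $z$, obtaining $-\psi_{1,zrr}-\psi_{1,zzz}-\frac3r\psi_{1,zr}=\omega_{1,z}$, i.e. exactly problem \eqref{eq2.10aa} for $\psi_{1,z}$, which has the same structure as \eqref{eq3.36u} with data $\omega_{1,z}$ and with $\psi_{1,z}=0$ on $S_1$, $\psi_{1,zz}=0$ on $S_2$. Repeating the argument of Step~1 (multiplier $-\psi_{1,zzz}$) gives $\int_\Omega(\psi_{1,zzr}^2+\psi_{1,zzz}^2)\,\ud x\le c\norm{\omega_{1,z}}_{L_2(\Omega)}^2$, which is \eqref{eq3.38u}. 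The main obstacle throughout is the rigorous justification that every boundary term generated at $r=0$ by the singular coefficient $\tfrac3r$ vanishes (or has a favorable sign); this is where \eqref{eq1.7a}, \eqref{eq1.8a} and Remark~\ref{r2.4} are indispensable, and where one should first argue for smooth approximating solutions and then pass to the limit, exactly as indicated in the proofs of Lemmas \ref{l3.6} and \ref{lem3.7}.
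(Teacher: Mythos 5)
Your overall strategy is sound and, for two of the three pieces, coincides with the paper's: Step 1 (multiplier $-\psi_{1,zz}$) and Step 3 (differentiate in $z$, multiplier $-\psi_{1,zzz}$, using $\psi_{1,zz}\vert_{S_2}=0$ from the equation restricted to $S_2$ and $\psi_{1,zz}\vert_{r=R}=0$) are exactly the paper's computations \eqref{eq3.39u}--\eqref{eq3.42u} and \eqref{eq3.47u}--\eqref{eq3.48u}. One caveat in Step 1: the boundary contribution at $r=0$ coming from the $\tfrac3r$ term does \emph{not} vanish --- by \eqref{eq1.7a} one only has $\psi_1=O(1)$, so $\psi_{1,z}(0,z)\neq0$ in general; it appears as $+\tfrac32\int_{-a}^a\psi_{1,z}^2\vert_{r=0}\,\ud z$ on the left, i.e.\ with a favorable sign, which is how the paper keeps it in \eqref{eq3.41u}. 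Your closing remark ("vanishes or has a favorable sign") covers this, but the explicit claim that \eqref{eq1.7a} makes it vanish is not right. In Step 2 you take a mirrored route: the paper tests with $\tfrac1r\psi_{1,r}$, obtains $\int_\Omega r^{-2}\psi_{1,r}^2\,\ud x\le c\norm{\omega_1}^2_{L_2(\Omega)}$ directly (all boundary terms of good sign, using $\psi_{1,r}\vert_{r=0}=0$ from Remark \ref{r2.4}), and then reads off $\norm{\psi_{1,rr}}_{L_2}$ from the equation; you test with $-\psi_{1,rr}$ (or square the radial operator) and would recover $\tfrac1r\psi_{1,r}$ from the equation. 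That works equally well, \emph{but} your "cleaner" justification contains a wrong identity: with the measure $\ud x=r\,\ud r\,\ud z$ the weight cancels the $1/r$, so $\int_\Omega\tfrac1r\psi_{1,r}\psi_{1,rr}\,\ud x=\tfrac12\int_{-a}^a\psi_{1,r}^2(R,z)\,\ud z\ \ (\ge0)$, a pure boundary term (again via $\psi_{1,r}\vert_{r=0}=0$), and \emph{not} $-\tfrac12\int_\Omega r^{-2}\psi_{1,r}^2\,\ud x$ plus a vanishing boundary term. Since the correct cross term is nonnegative, squaring $-\psi_{1,rr}-\tfrac3r\psi_{1,r}=F$ still yields both $\int_\Omega\psi_{1,rr}^2\,\ud x$ and $\int_\Omega r^{-2}\psi_{1,r}^2\,\ud x\le c\norm{F}^2_{L_2(\Omega)}$, so the slip is repairable and your Step 2 goes through; just replace the claimed interior term by the boundary term of good sign (or follow the paper's multiplier $\tfrac1r\psi_{1,r}$, identity \eqref{eq3.43u}--\eqref{eq3.44u}, and the triangle inequality for $\psi_{1,rr}$).
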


\begin{proof}
    First we show \eqref{eq3.37u}. Multiplying \eqref{eq3.36u} by $\psi_{1,zz}$ and integrating over $\Omega$ yields
    \begin{equation}\label{eq3.39u}
        -\int_\Omega \psi_{1,rr} \psi_{1,zz}\, \ud x - \int_\Omega \psi_{1,zz}^2\, \ud x - 3\int_\Omega \frac{1}{r} \psi_{1,r}\psi_{1,zz}\, \ud x = \int_\Omega \omega_1 \psi_{1,zz}\, \ud x.
    \end{equation}
    The first term in \eqref{eq3.39u} equals
    \begin{multline*}
        -\int_\Omega(\psi_{1,rr}\psi_{1,z})_{,z}\, \ud x + \int_\Omega \psi_{1,rrz}\psi_{1,z}\, \ud x \\
        = -\int_\Omega (\psi_{1,rr}\psi_{1,z})_{,z}\, \ud x + \int_\Omega (\psi_{1,rz}\psi_{1,z})_{,r}\, \ud x - \int_\Omega \psi_{1,rz}^2\, \ud x,
    \end{multline*}
    where the first term is equal to
    \[
        -\int_0^R \psi_{1,rr}\psi_{1,z}\big\vert_{S_2}\, r\ud r = 0,
    \]
    because $\psi_{1,rr}\vert_{S_2} = 0$ and the second
    \[
        \int_{-a}^a \psi_{1,rz}\psi_{1,z}\big\vert_{S_1}\, \ud z = 0,
    \]
    which follows from $\psi_{1,z}\vert_{S_1} = 0$.

    Consider the last term on the l.h.s. of \eqref{eq3.39u}. We have
    \begin{multline*}
        -3 \int_\Omega \psi_{1,r}\psi_{1,zz}\, \ud r\, \ud z = -3 \int_\Omega (\psi_{1,r}\psi_{1,z})_{,z}\,\ud r\, \ud z + 3\int_\Omega \psi_{1,rz}\psi_{1,z}\, \ud x \\
        = -\frac{3}{2} \int_{-a}^a \psi_{1,z}^2\big\vert_{r = 0}^{r = R}\, \ud z,
    \end{multline*}
    where we used that 
    \[
        \int_0^R \psi_{1,r}\psi_{1,z}\big\vert_{S_2}\, \ud r = 0
    \]
    because $\psi_{1,r}\big\vert_{S_2} = 0$.

    Using the above considerations in \eqref{eq3.39u} implies
    \begin{equation}\label{eq3.40u}
        -\int_\Omega \left(\psi_{1,rz}^2 + \psi_{1,zz}^2\right)\, \ud x + \frac{3}{2} \int_{-a}^a \psi_{1,z}^2\big\vert_{r = 0}^{r = R}\, \ud z = \int_\Omega \omega_1 \psi_{1,zz}\, \ud x.
    \end{equation}
    Since $\psi_{1,z}\big\vert_{r=R} = 0$ equality \eqref{eq3.40u} can be written in the form
    \begin{equation}\label{eq3.41u}
        \int_\Omega (\psi_{1,rz}^2 + \psi_{1,zz}^2)\, \ud x + \frac{3}{2} \int_{-a}^a \psi_{1,z}^2\big\vert_{r = 0}\, \ud z = -\int_\Omega \omega_1 \psi_{1,zz}\, \ud x.
    \end{equation}
    Applying the H\"older and Young inequalities to the r.h.s of \eqref{eq3.41u} we obtain
    \begin{equation}\label{eq3.42u}
        \int_\Omega (\psi_{1,rz}^2 + \psi_{1,zz}^2)\, \ud x + \int_{-a}^a \psi_{1,z}^2\big\vert_{r = 0}\, \ud z \leq \int_\Omega \omega_1^2\, \ud x.
    \end{equation}

    Multiplying \eqref{eq3.36u} by $\frac{1}{r} \psi_{1,r}$ and integrating over $\Omega$ yields
    \begin{equation}\label{eq3.43u}
        3\int_\Omega \abs{\frac{1}{r} \psi_{1,r}}^2\, \ud x = -\int_\Omega \psi_{1,rr} \frac{1}{r} \psi_{1,r}\, \ud x - \int_\Omega \psi_{1,zz} \frac{1}{r} \psi_{1,r}\, \ud x - \int_\Omega \omega_1 \frac{1}{r} \psi_{1,r}\, \ud x.
    \end{equation}
    The first term on the r.h.s of \eqref{eq3.43u} equals 
    \[
        -\int_\Omega \psi_{1,r} \psi_{1,rr}\, \ud r\, \ud z = -\frac{1}{2} \int_\Omega \partial_r (\psi_{1,r}^2)\, \ud r\, \ud z = -\frac{1}{2} \int_{-a}^a \psi_{1,r}^2\big\vert_{r = R}\, \ud z
    \]
    because $\psi_{1,r}\big\vert_{r = 0} = 0$ (see Remark \ref{r2.4}).
    The second term on the r.h.s of \eqref{eq3.43u} reads
    \begin{multline*}
        -\int_\Omega \psi_{1,zz}\psi_{1,r}\, \ud r\,\ud z = -\int_\Omega (\psi_{1,z} \psi_{1,r})_{,z}\ud r\, \ud z + \int_\Omega \psi_{1,z} \psi_{1,rz}\, \ud r\, \ud z \\
        = -\int_{S_2} \psi_{1,z}\psi_{1,r}\, \ud r + \frac{1}{2} \int_\Omega \partial_r (\psi_{1,z}^2)\, \ud r\, \ud z,
    \end{multline*}
    where the first integral vanishes because $\psi_{1,r}\big\vert_{S_2} = 0$ and the second equals
    \[
        \frac{1}{2}\int_{-a}^a \psi_{1,z}^2\big\vert_{r = 0}^{r = R}\, \ud z = - \frac{1}{2}\int_{-a}^a \psi_{1,z}(0,z)\, \ud z
    \]
    because $\psi_{1,z}\big\vert_{r = R} = 0$.

    Using the above results in \eqref{eq3.43u} yields
    \begin{multline}\label{eq3.44u}
        3\int_\Omega \abs{\frac{1}{r}\psi_{1,r}}^2\, \ud x + \frac{1}{2} \int_{-a}^a \psi_{1,r}^2(R,z)\, \ud z + \frac{1}{2} \int_{-a}^a \psi_{1,z}^2 (0,z)\, \ud z \\
        = -\int_\Omega \omega_1 \frac{1}{r}\psi_{1,r}\, \ud x.
    \end{multline}
    Applying the H\"older and Young inequalities to the r..s of \eqref{eq3.44u} we obtain
    \begin{equation*}\label{eq3.45u}
        \int_\Omega \abs{\frac{1}{r} \psi_{1,r}}^2\, \ud x + \int_{-a}^a (\psi_{1,r}^2(R,z) + \psi_{1,z}(0,z))\, \ud z \leq c \int_\Omega \omega_1^2\, \ud x.
    \end{equation*}
    From \eqref{eq3.36u} we infer that
    \begin{equation*}\label{eq3.46u}
        \norm{\psi_{1,rr}}^2_{L_2(\Omega)} \leq \norm{\psi_{1,zz}}^2_{L_2(\Omega)} + 3 \norm{\frac{1}{r} \psi_{1,r}}^2_{L_2(\Omega)} + \norm{\omega_1}^2_{L_2(\Omega)}.
    \end{equation*}
    Combining the above inequalit with \eqref{eq3.42u} yields \eqref{eq3.37u}.

    Next we show \eqref{eq3.38u}. Differentiating \eqref{eq3.36u} with respect to $z$, multiplying by $-\psi_{1,zzz}$ and integrating over $\Omega$ we obtain
    \begin{multline}\label{eq3.47u}
        \int_\Omega \psi_{1,rrz} \psi_{1,zzz}\, \ud x + \int_\Omega \psi_{1,zzz}^2\, \ud x + 3 \int_\Omega \frac{1}{r} \psi_{1,rz}\psi_{1,zzz}\, \ud x \\
        = -\int_\Omega \omega_{1,z}\psi_{1,zzz}\, \ud x.
    \end{multline}
    Integrating by parts in the first term yields
    \begin{multline*}
        \int_\Omega (\psi_{1,rrz}\psi_{1,zz})_{,z}\, \ud x - \int_\Omega \psi_{1,rrzz}\psi_{1,zz}\, \ud x \\
        = \int_0^R \psi_{1,rrz} \psi_{1,zz}\big\vert_{z = -a}^{z = a}\, r\ud r - \int_\Omega \psi_{1,rzz}\psi_{1,zz}\,\ud r\, \ud z + \int_\Omega \psi_{1,rzz}^2\, \ud x \\
        + \int_\Omega \psi_{1,rzz}\psi_{1,zz}\, \ud r\, \ud z = \int_0^R \psi_{1,rrz}\psi_{1,zz}\big\vert_{z = -a}^{z = a}\, r \ud r - \int_{-a}^a \psi_{1,rzz}\psi_{1,zz}r\big\vert_{r = 0}^{r = R}\, \ud z \\
        + \int_\Omega \psi_{1,rzz}^2\, \ud x + \int_\Omega \psi_{1,rzz}\psi_{1,zz}\, \ud r\, \ud z \equiv I.
    \end{multline*}
    Since $\psi_{1,zz}\big\vert_{r = R} = 0$ and $\psi_{1,rzz}\big\vert_{r = 0} = 0$ the second term in $I$ vanishes. 
    To examine the first termin in $I$ we project \eqref{eq3.36u} onto $S_2$. Then we have
    \[
        \psi_{1,zz}\big\vert_{S_2} = -\psi_{1,rr}\big\vert_{S_2} - \frac{3}{r} \psi_{1,r}\big\vert_{S_2} - \omega_1\big\vert_{S_2}.
    \]
    Since $\omega_1\big\vert_{S_2} = 0$ and $\psi_1\big\vert_{S_2} = 0$ it follows that $\psi_{1,zz}\big\vert_{S_2} = 0$. Therefore $I$ becomes
    \[
        I = \int_\Omega \psi_{1,rzz}^2\, \ud x + \int_\Omega \psi_{1,rzz}\psi_{1,zz}\, \ud r\, \ud z.
    \]
    The second termin in $I$ is equal to
    \[
        -\frac{1}{2} \int_{-a}^a \psi_{1,zz}^2\big\vert_{r = 0}\, \ud z,
    \]
    where it is used that $\psi_{1,zz}\big\vert_{r = R} = 0$.

    The last term on the l.h.s of \eqref{eq3.47u} equals
    \begin{equation*}
        -3\int_\Omega \psi_{1,rzz}\psi_{1,zz}\, \ud r\ud z = - \frac{3}{2} \int_{-a}^a \psi_{1,zz}^2\big\vert_{r = 0}^{r = R}\, \ud z = \frac{3}{2} \int_{-a}^a \psi_{1,zz}^2\big\vert_{r = 0}\, \ud z,
    \end{equation*}
    where we used that $\psi_{1,zz}\big\vert_{S_2} = 0$ and $\psi_{1,zz}\big\vert_{r = R} = 0$.

    In view of the above calculations equality \eqref{eq3.47u} takes the form
    \begin{equation}\label{eq3.48u}
        \int_\Omega (\psi_{1,rzz}^2 + \psi_{1,zzz}^2)\, \ud x + \int_{-a}^a \psi_{1,zz}^2\, \ud z = -\int_\Omega \omega_{1,z}\psi_{1,zzz}\, \ud x
    \end{equation}
    Applying the H\"older and Young inequalities to the r.h.s of \eqref{eq3.48u} gives
    \begin{equation*}\label{eq3.49u}
        \int_\Omega (\psi_{1,rzz}^2 + \psi_{1,zzz}^2)\, \ud x + \int_{-a}^a\psi_{1,zz}^2\big\vert_{r = 0}\, \ud z \leq \int_\Omega \abs{\omega_{1,z}}^2\, \ud x.
    \end{equation*}
    The above inequality implies \eqref{eq3.38u} and concludes the proof.

\end{proof}

\begin{remark}
    Up to now we have considered problem (\ref{3.1}) treating $z$ as a parameter. It describes solutions to (\ref{eq1.60}) only in a neighborhood of the axis of symmetry. Solutions to (\ref{eq1.60}) in a domain $r > r_0 > 0$ are described by problem (\ref{1.15}). 
    From \eqref{2.5}, \eqref{eq3.37u} and \eqref{eq1.14a}$_3$ we obtain for solutions to \eqref{1.15} the estimate
    \begin{equation}\label{eq3.50u}
        \norm{\omega}_{H^{2 + k}(\Omega^{(2)})} \leq c \norm{\omega_1}_{H^k(\Omega^{(2)})},
    \end{equation}
    where $k \in \{0,1\}$. Since $\operatorname{supp} \omega \subset \Omega^{(2)}$ we see that \eqref{eq3.50u} can be deduced for weighted spaces
    \begin{equation*}\label{eq3.51u}
        \norm{w}_{H^{2 + k}_\mu(\Omega^{(2)})} \leq c \norm{\omega_1}_{H^k_\mu(\Omega^{(2)})}, \qquad \mu \geq 0.
    \end{equation*}
\end{remark}

\section{Estimates with respect to $z$ for solutions to (\ref{eq1.60})}

Consider problem (\ref{eq1.60}) in the form
\begin{equation}
    \left\{
    \begin{aligned}
        &-\psi_{1,rr} - \frac{3}{r}\psi_{1,r} - \psi_{1,zz} = \omega_1 & &\text{in $\Omega$},\\
        &u = 0 & &\text{for $z\in\{-a,a\}$}, \\
        &u = 0 & &\text{for $r=R$}.
    \end{aligned}
    \right.
\label{4.1}
\end{equation}

\begin{lemma}\label{lem4.1}
    Fix $\mu \in [0,1)$. Assume that $\omega_1\in L_{2,\mu}(\Omega)$. Then the following estimate holds
    \begin{equation}
        \int_\Omega\left(\psi_{1,zz}^2 + \psi_{1,zr}^2\right)r^{2\mu}\, \ud x + 2\mu(1 - \mu)\int_\Omega \psi_{1,z}^2 r^{2\mu - 2}\, \ud x \leq c\int_\Omega \omega_1^2 r^{2\mu}\, \ud x.
    \label{4.2}
    \end{equation}
\end{lemma}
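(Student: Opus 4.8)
The plan is to repeat the energy computation of Lemma \ref{lem3.8} with the weight $r^{2\mu}$ inserted into the test function; note that the case $\mu=0$ of \eqref{4.2} is already contained in \eqref{eq3.42u}. Concretely, I would multiply $(\ref{4.1})_1$ by $-\psi_{1,zz}r^{2\mu}$ and integrate over $\Omega$, which gives
\[
    \int_\Omega \psi_{1,rr}\psi_{1,zz}r^{2\mu}\,\ud x + 3\int_{-a}^a\int_0^R \psi_{1,r}\psi_{1,zz}r^{2\mu}\,\ud r\,\ud z + \int_\Omega \psi_{1,zz}^2 r^{2\mu}\,\ud x = -\int_\Omega \omega_1\psi_{1,zz}r^{2\mu}\,\ud x .
\]
The two mixed terms are handled by integrating by parts first in $z$ and then in $r$. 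In the first term the $z$-integration produces the boundary term $\int_0^R\psi_{1,rr}\psi_{1,z}\big\vert_{z=-a}^{z=a}\,r^{2\mu+1}\,\ud r$, which vanishes because $\psi_1\vert_{S_2}=0$ forces $\psi_{1,rr}\vert_{S_2}=0$; the ensuing $r$-integration yields $\int_\Omega\psi_{1,rz}^2 r^{2\mu}\,\ud x+(2\mu+1)J$ with $J:=\int_{-a}^a\int_0^R\psi_{1,rz}\psi_{1,z}r^{2\mu}\,\ud r\,\ud z$, the $r$-boundary terms disappearing since $\psi_{1,z}\vert_{r=R}=0$ and, at $r=0$, the power $r^{2\mu+1}$ together with the expansion of Remark \ref{r2.4} kills them. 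In the second term the $z$-integration gives $-3J$, the boundary term vanishing because $\psi_{1,r}\vert_{S_2}=0$.

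Next I would evaluate $J=\frac12\int_{-a}^a\int_0^R\partial_r(\psi_{1,z}^2)\,r^{2\mu}\,\ud r\,\ud z$. For $\mu\in(0,1)$ the boundary contributions vanish ($\psi_{1,z}\vert_{r=R}=0$ and $r^{2\mu}\to0$ as $r\to0$), so $J=-\mu\int_\Omega\psi_{1,z}^2 r^{2\mu-2}\,\ud x$; for $\mu=0$ one gets instead $J=-\frac12\int_{-a}^a\psi_{1,z}^2(0,z)\,\ud z$, i.e.\ the nonnegative boundary term already present in Lemma \ref{lem3.8}, and that case needs no further argument. Collecting the three terms, the coefficient in front of $\int_\Omega\psi_{1,z}^2 r^{2\mu-2}\,\ud x$ comes out as $-\mu(2\mu+1)+3\mu=2\mu(1-\mu)$, so the left-hand side equals $\int_\Omega(\psi_{1,rz}^2+\psi_{1,zz}^2)r^{2\mu}\,\ud x+2\mu(1-\mu)\int_\Omega\psi_{1,z}^2 r^{2\mu-2}\,\ud x$. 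On the right I would use the H\"older and Young inequalities, $\abs{\int_\Omega\omega_1\psi_{1,zz}r^{2\mu}\,\ud x}\le\frac12\int_\Omega\psi_{1,zz}^2 r^{2\mu}\,\ud x+\frac12\int_\Omega\omega_1^2 r^{2\mu}\,\ud x$, absorb the first summand into the left-hand side, and note that $\mu<1$ makes $2\mu(1-\mu)\ge0$; this gives \eqref{4.2}.

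The only point requiring care is justifying the integrations by parts near the axis, since a priori $\psi_1$ has only the regularity of Lemmas \ref{l2.3} and \ref{lem3.8}. One way is to perform the computation on $\Omega^{(2)}$, where \eqref{2.6} provides smoothness, together with the expansion $\psi_1=a_1(z)+a_3(z)r^2+o(r^4)$ of Remark \ref{r2.4} near $r=0$ — which yields $\psi_{1,r}\vert_{r=0}=0$ and makes the $r=0$ boundary terms $\psi_{1,rz}\psi_{1,z}r^{2\mu+1}$ and, for $\mu>0$, $\psi_{1,z}^2 r^{2\mu}$ vanish — or, equivalently, to carry out the estimate for smooth approximations and pass to the limit. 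All the weighted integrals that occur are finite, because $r^{2\mu}\le R^{2\mu}$ gives $L_2(\Omega)\subset L_{2,\mu}(\Omega)$ and the relevant second derivatives of $\psi_1$ already lie in $L_2(\Omega)$ by Lemma \ref{lem3.8}. Once the bookkeeping of the boundary terms and the elementary algebra producing the factor $2\mu(1-\mu)$ are settled, the absorption step is immediate.
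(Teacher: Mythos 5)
Your proposal is correct and follows essentially the same route as the paper: testing with $-\psi_{1,zz}r^{2\mu}$, integrating by parts in $z$ and then in $r$, using $\psi_{1,rr}\vert_{S_2}=\psi_{1,r}\vert_{S_2}=0$, $\psi_{1,z}\vert_{r=R}=0$ and the expansion of Remark \ref{r2.4} at $r=0$, which yields the coefficient $-\mu(2\mu+1)+3\mu=2\mu(1-\mu)$, and finishing with H\"older--Young and absorption. Your separate handling of $\mu=0$ via \eqref{eq3.42u} and the explicit justification of the integrations by parts near the axis are only minor refinements of the same argument.
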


\begin{proof}
    Multiply $(\ref{4.1})_1$ by $-\psi_{1,zz}r^{2\mu}$ and integrate over $\Omega$. Then we have
    \begin{multline}
        \int_\Omega \psi_{1,zz}^2 r^{2\mu}\, \ud x + \int_\Omega \psi_{1,rr} \psi_{1,zz}r^{2\mu}\, \ud x + 3\int_\Omega \frac{1}{r}\psi_{1,r}\psi_{1,zz}r^{2\mu}\, \ud x\\
        =-\int_\Omega \omega_1 \psi_{1,zz}r^{2\mu}\, \ud x.
    \label{4.3}
    \end{multline}
    Integrating by parts in the second term on the l.h.s. we obtain
    \begin{multline*}
        -\int_\Omega \psi_{1,rrz} \psi_{1,z}r^{2\mu}\, \ud x = -\int_\Omega \psi_{1,rrz} \psi_{1,z}r^{2\mu+1}\, \ud r\ud z = -\int_\Omega(\psi_{1,rz} \psi_{1,z}r^{2\mu+1})_{,r}\, \ud r\ud z \\
        + \int_\Omega \psi_{1,rz}^2r^{2\mu}\, \ud x +(2\mu+1)\int_\Omega \psi_{1,rz}\psi_{1,z}r^{2\mu}\, \ud r\ud z\equiv I_1+I_2+I_3.
    \end{multline*}
    We easily see that
    \[
        I_1=-\int_{-a}^a\psi_{1,rz}\psi_{1,z}r^{2\mu}\bigg\vert_{r=0}^{r=R}\, \ud z  = 0
    \]
    because $\psi_{1,z}\big\vert_{r=R}=0$ and Remark \ref{r2.4} imply that $\psi_{1,z}\big\vert_{r=0}=0$.

    Finally
    \begin{multline*}
        I_3 = \frac{2\mu+1}{2}\int_\Omega\partial_r \psi_{1,z}^2r^{2\mu}\, \ud r\ud z = \frac{2\mu+1}{2}\int_\Omega\partial_r (\psi_{1,z}^2r^{2\mu})\, \ud r\ud z\\
    - \mu(2\mu+1)\int_\Omega \psi_{1,z}^2r^{2\mu-1}\, \ud r\ud z,
    \end{multline*}
    where the first integral vanishes under the same arguments used for $I_1$.

    The last term on the l.h.s. of (\ref{4.3}) equals
    \begin{multline*}
        -3\int_\Omega \psi_{1,zr}\psi_{1,z}r^{2\mu}\, \ud r\ud z = -\frac{3}{2}\int_\Omega\partial_r(\psi_{1,z}^2)r^{2\mu}\, \ud r\ud z \\
        =-\frac{3}{2}\int_\Omega\partial_r(\psi_{1,z}^2r^{2\mu})\, \ud r\ud z + 3\mu\int_\Omega \psi_{1,z}^2r^{2\mu-1}\, \ud r\ud z,
    \end{multline*}
    where the first integral vanishes by the same arguments as in the case of $I_1$.

    Using the above results in (\ref{4.3}) yields
    \begin{equation*}
        \int_\Omega\left(\psi_{1,zz}^2 + \psi_{1,zr}^2\right)r^{2\mu}\, \ud x + 2\mu(1-\mu)\int_\Omega \psi_{1,z}^2r^{2\mu-2}\, \ud x \leq c\int_\Omega\omega_1^2r^{2\mu}\, \ud x.
        \label{4.4}
    \end{equation*}
    The above inequality implies (\ref{4.2}) and concludes the proof.
\end{proof}

\begin{lemma}\label{lem4.2}
    Fix $\mu \in [0,1)$. Assume that $\omega_{1,z}\in L_{2,\mu}(\Omega)$. Then
    \begin{equation}
        \int_\Omega\left(\psi_{1,zzz}^2+\psi_{1,rzz}^2\right)r^{2\mu}\, \ud x+2\mu(1-\mu)\int_\Omega \psi_{1,zz}^2r^{2\mu-2}\, \ud x \\
        \leq c\int_\Omega \omega_{1,z}^2r^{2\mu}\, \ud x.
    \label{4.5}
    \end{equation}
\end{lemma}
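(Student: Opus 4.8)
The plan is to repeat the proof of Lemma~\ref{lem4.1} one differentiation order higher. First I differentiate $(\ref{4.1})_1$ with respect to $z$, which gives
\[
    -\psi_{1,rrz} - \frac{3}{r}\psi_{1,rz} - \psi_{1,zzz} = \omega_{1,z} \qquad \text{in } \Omega,
\]
and I record the boundary relations that will be used repeatedly: $\psi_{1,zz}\big\vert_{r=R} = 0$ (from $\psi_{1,z}\big\vert_{S_1}=0$) and $\psi_{1,zz}\big\vert_{S_2} = 0$ (obtained by restricting $(\ref{4.1})_1$ to $S_2$ and using $\omega_1\big\vert_{S_2}=0$ and $\psi_1\big\vert_{S_2}=0$, exactly as in the proof of Lemma~\ref{lem3.8}). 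Then I multiply the differentiated equation by $-\psi_{1,zzz}r^{2\mu}$ and integrate over $\Omega$, obtaining
\[
    \int_\Omega \psi_{1,zzz}^2 r^{2\mu}\,\ud x + \int_\Omega \psi_{1,rrz}\psi_{1,zzz}r^{2\mu}\,\ud x + 3\int_\Omega \frac{1}{r}\psi_{1,rz}\psi_{1,zzz}r^{2\mu}\,\ud x = -\int_\Omega \omega_{1,z}\psi_{1,zzz}r^{2\mu}\,\ud x.
\]

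Next I integrate by parts in the second and third integrals on the left. In the second one I first move one $z$-derivative from $\psi_{1,zzz}$ onto $\psi_{1,rrz}$ and then the two $r$-derivatives from $\psi_{1,rrzz}$ onto $\psi_{1,zz}$, which produces $\int_\Omega \psi_{1,rzz}^2 r^{2\mu}\,\ud x$ together with $(2\mu+1)\int_\Omega \psi_{1,rzz}\psi_{1,zz}r^{2\mu}\,\ud r\,\ud z$; the latter equals $\tfrac{2\mu+1}{2}\int_\Omega\partial_r(\psi_{1,zz}^2)r^{2\mu}\,\ud r\,\ud z = -\mu(2\mu+1)\int_\Omega\psi_{1,zz}^2 r^{2\mu-2}\,\ud x$ after discarding a perfect $r$-derivative. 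In the third integral, one integration by parts in $z$ turns it into $-3\int_\Omega\psi_{1,rzz}\psi_{1,zz}r^{2\mu}\,\ud r\,\ud z$, which by the same manipulation equals $3\mu\int_\Omega\psi_{1,zz}^2 r^{2\mu-2}\,\ud x$. All boundary contributions vanish: the $z=\pm a$ terms carry the factor $\psi_{1,zz}\big\vert_{S_2}=0$, the $r=R$ terms carry $\psi_{1,zz}\big\vert_{r=R}=0$, and the $r=0$ terms vanish because of the expansion $\psi_1 = a_1(z)r + a_3(z)r^3 + o(r^4)$ from Remark~\ref{r2.4}, which makes $\psi_{1,rzz}$ bounded and $\psi_{1,zz}r^{2\mu}\to 0$ as $r\to0^+$ for $\mu\ge0$.

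Collecting these identities, the left-hand side becomes $\int_\Omega(\psi_{1,rzz}^2+\psi_{1,zzz}^2)r^{2\mu}\,\ud x + \bigl(3\mu-\mu(2\mu+1)\bigr)\int_\Omega\psi_{1,zz}^2 r^{2\mu-2}\,\ud x$, and since $3\mu-\mu(2\mu+1)=2\mu(1-\mu)\ge0$ for $\mu\in[0,1)$ this is precisely the left-hand side of \eqref{4.5}. Estimating the right-hand side $-\int_\Omega\omega_{1,z}\psi_{1,zzz}r^{2\mu}\,\ud x$ by the H\"older and Young inequalities and absorbing $\tfrac12\int_\Omega\psi_{1,zzz}^2 r^{2\mu}\,\ud x$ into the left-hand side yields \eqref{4.5}.

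I expect the only genuinely delicate point to be the justification of the integrations by parts and of the vanishing of the boundary terms, which requires enough a priori regularity of $\psi_1$. For $\mu=0$ this regularity is already contained in \eqref{eq3.38u} of Lemma~\ref{lem3.8}; for $\mu\in(0,1)$ one first works with functions from $\mathcal{C}(-a,a;\mathcal{C}^\infty_0(\R1_+))$ and passes to the limit by density, using that $r^{2\mu}\le R^{2\mu}$ so that $L_2(\Omega)\hookrightarrow L_{2,\mu}(\Omega)$. The behaviour at $r=0$ is controlled throughout by Remark~\ref{r2.4}, and the rest of the argument is bookkeeping parallel to Lemmas~\ref{lem4.1} and~\ref{lem3.8}.
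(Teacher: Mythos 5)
Your proposal is essentially the paper's own proof: differentiate \eqref{4.1} in $z$, test with $-\psi_{1,zzz}r^{2\mu}$, integrate by parts using $\psi_{1,zz}\vert_{z\in\{-a,a\}}=0$ (obtained from the equation and $\omega_1\vert_{S_2}=0$) and $\psi_{1,zz}\vert_{r=R}=0$, and collect the coefficient $3\mu-\mu(2\mu+1)=2\mu(1-\mu)$ before applying H\"older--Young, exactly as in \eqref{4.6}--\eqref{4.10}. One minor slip worth fixing: Remark \ref{r2.4} gives the expansion $a_1(z)r+a_3(z)r^3+o(r^4)$ for $\psi$, not for $\psi_1$ (so $\psi_1=a_1(z)+a_3(z)r^2+\dots$ and $\psi_{1,zz}\vert_{r=0}$ need not vanish); the $r=0$ boundary terms should instead be dismissed via the factors $r^{2\mu+1}$, resp. $r^{2\mu}$ with $\mu>0$, while for $\mu=0$ the surviving contribution $(1-\mu)\int_{-a}^a\psi_{1,zz}^2\vert_{r=0}\,\ud z$ enters with a favorable sign (as in the unweighted computation of Lemma \ref{lem3.8}), so the estimate \eqref{4.5} is unaffected and your treatment here parallels the paper's.
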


\begin{proof}
    Differentiate (\ref{4.1}) with respect to $z$, multiply by $-\psi_{1,zzz}r^{2\mu}$ and integrate over $\Omega$. Then we obtain
    \begin{multline}
        \int_\Omega \psi_{1,rrz}\psi_{1,zzz}r^{2\mu}\, \ud x+\int_\Omega \psi_{1,zzz}^2r^{2\mu}\, \ud x+3\int_\Omega{1\over r}\psi_{1,rz}\psi_{1,zzz}r^{2\mu}\, \ud x \\
        =-\int_\Omega \omega_{1,z}\psi_{1,zzz}r^{2\mu}\, \ud x.
    \label{4.6}
    \end{multline}
    From (\ref{4.1}) it follows that
    \begin{equation}
        \psi_{1,zz}\big\vert_{z\in\{-a,a\}} = 0
        \label{4.7}
    \end{equation}
    because $\psi_1\big\vert_{z\in\{-a,a\}}=0$ and $\omega_1\big\vert_{z\in\{-a,a\}}=0$.

    In view of (\ref{4.7}) the first integral on the l.h.s. of (\ref{4.6}) equals
    \begin{multline}
        -\int_\Omega \psi_{1,rrzz} \psi_{1,zz}r^{2\mu}\, \ud x = -\int_\Omega\left(\psi_{1,rzz} \psi_{1,zz}r^{2\mu+1}\right)_{,r}\, \ud r\ud z \\
        +\int_\Omega \psi_{1,rzz}^2 r^{2\mu}\, \ud x + (2\mu+1)\int_\Omega \psi_{1,rzz} \psi_{1,zz}r^{2\mu}\, \ud r\ud z.
        \label{4.8}
    \end{multline}
    In virtue of boundary condition $u|_{r=R}=0$ and Remark \ref{r2.4} the first integral on the r.h.s. of (\ref{4.8}) vanishes.
    
    Integrating by parts in the last term on the l.h.s. of (\ref{4.6}) and using (\ref{4.8}), we obtain
    \begin{multline}
        \int_\Omega\left(\psi_{1,zzz}^2 + \psi_{1,rzz}^2\right)r^{2\mu}\, \ud x + (2\mu-2)\int_\Omega \psi_{1,rzz} \psi_{1,zz}r^{2\mu}\, \ud r\ud z \\
        =-\int_\Omega \omega_{1,z}\psi_{1,zzz}r^{2\mu}\, \ud x.
    \label{4.9}
    \end{multline}
    The second term on the l.h.s. equals
    \begin{multline}
        (\mu-1)\int_\Omega\partial_r\left(\psi_{1,zz}^2\right)r^{2\mu}\, \ud r\ud z \\
        =(\mu-1)\int_\Omega \partial_r\left(\psi_{1,zz}^2r^{2\mu}\right)\, \ud r\ud z +2\mu(1-\mu)\int_\Omega \psi_{1,zz}^2r^{2\mu-1}\, \ud r\ud z \\
        =(\mu-1)\int_{-a}^a \psi_{1,zz}^2r^{2\mu}\bigg|_{r=0}^{r=R}\, \ud z + 2\mu(1-\mu) \int_\Omega \psi_{1,zz}^2r^{2\mu-2}\, \ud x,
        \label{4.10}
    \end{multline}
    where the first integral vanishes because $u_{,zz}|_{r=R}=0$ and Remark \ref{r2.4} yields that $u_{,zz}|_{r=0}=0$. Using (\ref{4.10}) in (\ref{4.9}) implies (\ref{4.5}). This ends the proof.
\end{proof}

\section{Proofs of theorems}

Combining Lemma \ref{lem3.1} with $k = 0$ and $k = 1$ along with Lemmas \ref{lem4.1} and \ref{lem4.2} we obtain 
\begin{multline*}
    \int_{-a}^a\|\psi_1-\psi_1^{(1)}(0)\|_{H_\mu^2(0,R)}^2\,\ud z+ \|\psi_{1,zz}\|_{L_{2,\mu}(\Omega)}^2 + \norm{\psi_{1,rz}}_{L_{2,\mu}(\Omega)}^2 \\
    + 2\mu(1-\mu)\int_\Omega \psi_{1,z}^2r^{2\mu-2}\, \ud x \leq c\|\omega_1\|_{L_{2,\mu}(\Omega)}^2,
\end{multline*}
and
\begin{multline*}
    \int_{-a}^a\|\psi_1-\psi_1^{(1)}(0)\|_{H_\mu^3(0,R)}^2\,\ud z+ \|\psi_{1,zzz}\|_{L_{2,\mu}(\Omega)}^2 + \norm{\psi_{1,rzz}}_{L_{2,\mu}(\Omega)}^2 \\
    + 2\mu(1 - \mu)\int_\Omega \psi_{1,zz}^2r^{2\mu-2}\, \ud x \leq c\|\omega_1\|_{H_\mu^1(\Omega)}^2,
\end{multline*}
where $\mu\in(0,1)$. This proves theorems \ref{t1} and \ref{t1.2}.

Lemmas \ref{lem3.7}, \ref{l2.3} and \ref{lem3.8} used with \eqref{eq1.14a} and \eqref{eq3.50u} for $k = 0$ yield
\begin{equation*}
    \int_{-a}^a\norm{\psi_1 - \psi_1^{(1)}(0) - \chi}_{H_0^2(0,R)}^2\,\ud z + \int_\Omega \left(\psi_{1,zz}^2 + \psi_{1,zr}^2\right)\, \ud x \leq c\norm{\omega_1}_{L_2(\Omega)}^2
\end{equation*}
and Lemmas \ref{l3.6}, \ref{2.3} and \ref{lem3.8} along with \eqref{eq1.14a} and \eqref{eq3.50u} for $k = 1$ give
\begin{multline*}
    \int_{-a}^a \norm{\psi_1 - \psi_1^{(1)}(0) - \eta}_{H_0^3(0,R)}^2\,\ud z+\int_\Omega (\psi_{1,zzz}^2 + \psi_{1,zzr}^2)\, \ud x + \norm{\psi_1}^2_{H^2(\Omega)}\\
    \leq c\norm{\omega_1}_{H^1(\Omega)}^2,
\end{multline*}
thus theorems \ref{t1.3} and \ref{t1.4} follow.

\bibliographystyle{elsarticle-num}
\bibliography{bibliography}

\end{document}